\def\sqr#1#2{{\vcenter{\hrule height.#2pt
        \hbox{\vrule width.#2pt height#1pt \kern#1pt
                \vrule width.#2pt}
        \hrule height.#2pt}}}
\newtheorem{theorem}{Theorem}[section]
\newtheorem{lemma}[theorem]{Lemma}
\newtheorem{proposition}[theorem]{Proposition}
\newtheorem{corollary}[theorem]{Corollary}
\theoremstyle{definition}
\newtheorem{definition}[theorem]{Definition}
\newtheorem{example}[theorem]{Example}
\newtheorem{remark}[theorem]{Remark}
\newtheorem{setting}[theorem]{Setting}
\newtheorem{claim}[theorem]{Claim}
\newtheorem{discussion}[theorem]{Discussion}
\newtheorem{construction}[theorem]{Construction}
\DeclareMathOperator{\n}{\mathfrak n}
\DeclareMathOperator{\m}{\mathfrak m}
\DeclareMathOperator{\p}{\mathfrak p}
\DeclareMathOperator{\q}{\mathfrak q}
\DeclareMathOperator\ord{ord}
\DeclareMathOperator{\hgt}{ht}
\DeclareMathOperator{\rat.rank}{rat.rank}
\DeclareMathOperator{\tr.deg}{tr.deg}
\DeclareMathOperator{\N}{\mathbb N}
\DeclareMathOperator{\Z}{\mathbb Z}
\DeclareMathOperator{\Q}{\mathbb Q}
\DeclareMathOperator{\Bl}{Bl}
\def\alert#1{\smallskip{\hskip\parindent\vrule%
\vbox{\advance\hsize-2\parindent\hrule\smallskip\parindent.4\parindent%
\narrower\noindent#1\smallskip\hrule}\vrule\hfill}\smallskip}
\begin{document}

\title[directed unions of local quadratic transforms ]
{Directed unions of local quadratic transforms \\ of a regular local ring }


\author{William Heinzer}
\address{Department of Mathematics, Purdue University, West
Lafayette, Indiana 47907 U.S.A.}
\email{heinzer@math.purdue.edu}

\author{Mee-Kyoung Kim$^{1}$}
\address{Department of Mathematics, Sungkyunkwan University, Jangangu Suwon
440-746, Korea} 
\email{mkkim@skku.edu}

\thanks{$^{1}$ This paper was supported by Faculty Research Fund, 
Sungkyunkwan University, 2013.}

\author{Matthew Toeniskoetter}
\address{Department of Mathematics, Purdue University, West
Lafayette, Indiana 47907 U.S.A.}
\email{mtoenisk@math.purdue.edu}

\thanks{Correspondence with Alan Loper, Bruce Olberding
and Hans Schoutens  that motivated our interest in  the work of Shannon and Granja on
infinite directed unions of 
local quadratic transformations is gratefully acknowledged. }

\date \today

\subjclass{Primary: 13H05, 13A18, 13C05; Secondary: 13E05, 13H15}
\keywords{local quadratic transform, infinite directed family,
switches strongly infinitely often, rank and rational rank of 
a valuation domain, 
 transform of an ideal,  monomial ideal, 
 valuation ideal.  
}

\begin{abstract} 
	Let $(R,\m)$ be a $d$-dimensional regular local domain with $d \ge 2$ and 
	let $V$ be a valuation domain  birationally dominating  $R$ 
	such  that the residue field of $V$ is algebraic over $R/\m$.  
	Let $v$ be a valuation associated to $V$. Associated to $R$ and $V$ there exists an
	infinite directed family  $\{(R_n, \m_n)\}_{n \ge 0}$   of $d$-dimensional regular local rings dominated by $V$ 
	with  $R = R_0$ and    $R_{n+1}$  the  local
	quadratic transform of $R_n$ along $V$.    
	Let 
	$S := \bigcup_{n \ge 0}R_n$.    Abhyankar proves  that $S = V$ if $d = 2$.   Shannon 
	observes that often $S$ is properly contained in $V$ if $d \ge 3$, and Granja
	gives necessary and sufficient conditions for $S$ to be equal to $V$.    The directed 
	family $\{(R_n,\m_n)\}_{n \ge 0}$  and the integral domain $S = \bigcup_{n \ge 0}R_n$ may be
	defined without first prescribing a dominating valuation domain $V$.    
	If $\{(R_n, \m_n)\}_{n \ge 0}$ switches strongly infinitely often, then 
	$S = V$ is a rank one valuation domain and for nonzero elements $f$ and $g$ in $\m$, 
	we have  $ \frac{v (f)}{v (g)} ~= ~ \underset{n \to \infty}\lim  \frac{\ord_{R_n} (f)}{\ord_{R_n} (g)}$.  
	If $\{(R_n, \m_n)\}_{n \ge 0}$ is a 
	family of monomial local quadratic transforms, we give  necessary and sufficient conditions for $\{(R_n, \m_n)\}_{n \ge 0}$ to 
	switch strongly infinitely often. If these conditions hold, then $S = V$ is a rank one valuation domain of 
	rational rank $d$ and $v$ is a monomial valuation.   
	Assume that $V$ is rank one and birationally dominates $S$. Let 
	$s = \sum_{i=0}^{\infty} v (\m_i)$. Granja, Martinez and Rodriguez show that $s = \infty$ implies $S = V$.
	We prove that $s$ is finite if $V$ has rational rank at least $2$.
	In the case where $V$ has maximal rational rank, we give a sharp upper bound for $s$ and 
	show that $s$ attains this bound if and only if the sequence switches strongly infinitely often.
\end{abstract}

\maketitle
\bigskip

\baselineskip 18 pt

\section{Introduction} \label{c1}
Let $R = R_0$ be a $d$-dimensional regular local ring,  and for each integer $n \ge 0$,
let $R_{n+1}$ be a $d$-dimensional local quadratic transform of $R_n$.  Thus  
$\{(R_n,\m_n)\}_{n \ge 0}$ is  a directed family of 
$d$-dimensional  regular local rings.   Let 
$S := \bigcup_{n \ge 0}R_n$.   If $d = 2$,      
Abhyankar proves in \cite{A} that $S$ is always a valuation domain.    
In the case where  $d \ge 3$,  
Shannon in \cite{S} and later Granja in \cite{Gr} 
consider conditions in order that $S$ be a valuation domain. In this
connection,  Shannon gives  the following definition in  \cite[page~314]{S}.

\begin{definition}  Let $\{(R_n, \m_n)\}_{n \ge 0}$ be an infinite directed family 
of local quadratic transforms of a regular local ring  $(R, \m)$.   
We say that  $\{(R_n, \m_n)\}_{n \ge 0}$  {\bf  switches strongly infinitely often} if there
 does not exist an integer $j$ and a height one prime ideal  $\p_j$ of $R_j$ 
with the property that $\bigcup_{n=0}^{\infty} R_n  \subset (R_j)_{\p_j}$.
\end{definition}

Assume that $V$ is a rank one valuation domain that birationally dominates $S:= \bigcup_{n\geq 0}R_n$.
If $V$ is non-discrete, 
Shannon proves in \cite[Proposition~4.18]{S} that $S=V$ 
if and only if  $\{(R_n, \m_n)\}_{n \ge 0}$ switches strongly infinitely often. 
It is observed in \cite[Theorem~6]{GR} that the proof given by Shannon also holds
if $V$ is rank one discrete. 

Granja \cite[Proposition~7]{Gr} shows that if $S := \bigcup_{n\geq 0}R_n$ is a valuation domain $V$,
then $V$ has real rank either one or two, and in \cite[Theorem~13]{Gr}, he characterizes the sequence of local quadratic transforms of $R$ along $V$. 
If $V$ has rank one, then $\{(R_n, \m_n)\}_{n \ge 0}$ switches strongly infinitely often.
If $V$ has rank two, then the value group of $V$ is 
$\Z \oplus ~ G$, where $G$ has rational rank one. In this
case  Granja proves \cite[Theorem~13]{Gr} that
 the sequence $\{(R_n, \m_n)\}_{n \ge 0}$ is height one directed as in Definition~\ref{htonedir}.

\begin{definition} \label{htonedir}
  Let $\{(R_n, \m_n)\}_{n \ge 0}$ be an infinite directed family 
of local quadratic transforms of a regular local ring  $(R, \m)$.   
The sequence  $\{(R_n, \m_n)\}_{n \ge 0}$ is  {\bf  height one directed} if there
exists a nonnegative integer $j$ and a height one prime ideal $\p$ of $R_j$
such that    $\bigcup_{n=0}^{\infty} R_n  \subset (R_j)_{\p}$, and if
for some nonnegative integer $k$ and some height one prime ideal $\q$ of $R_k$ we have
$\bigcup_{n=0}^{\infty} R_n  \subset (R_k)_{\q}$, then $(R_j)_{\p} = (R_k)_{\q}$.
\end{definition}

Let $\{ (R_n, \m_n) \}_{n \ge 0}$ be a directed family of Noetherian local domains.
Assume that $\ord_{R_n}$ defines a valuation for each $n$.
If \,$\bigcup_{n=0}^{\infty} R_n = V$ is a rank one valuation domain,
we prove in Theorem~\ref{approx}   that the valuation $v$ is  related to  the order valuations of the $R_n$
as follows:  
	$$\frac{v (f)}{v (g)} ~ =   ~  \lim_{n \rightarrow \infty} \frac{\ord_{R_n} (f) }{\ord_{R_n} (g)},$$
for all nonzero elements $f,g$ in the maximal ideal of $V$.

 Let $(R,\m)$ be a $d$-dimensional regular local ring with $d \ge 2$ and fix $d$ elements $x, y, \ldots, z$ such that $\m = (x,y, \ldots, z)R$.  
In Section~\ref{c3}, we consider finite sequences $R = R_0 \subset R_1 \subset \cdots \subset R_n$  of monomial local quadratic transforms.
We prove in  Theorem~\ref{3.3a}  that the order in $R$ of each nonzero $f \in \m$   is greater than the order of the transform of $f$ in $R_n$ 
if and only if in passing from $R_0$ to $R_n$ we  transform in each monomial direction at least once.

In Section~\ref{c4}, we consider infinite sequences 
$\{(R_n,\m_n)\}_{n \ge 0}$ of monomial  local quadratic transforms.
In Theorem~\ref{3.3}, we prove that $\bigcup_{n=0}^{\infty} R_n$ is a rank one valuation
domain if and only if the transform in each monomial direction occurs infinitely many times. 

In Section~\ref{c5}, we present examples  of infinite sequences of local quadratic transforms 
$\{(R_n, \m_n)\}_{n \ge 0}$ 
that switch strongly infinitely often and have the property that $\bigcup_{\n \ge 0}R_n = V$ 
is 
a rank one valuation ring having rational rank less than $\dim R$.

Let $\{ (R_n, \m_n) \}_{n \ge 0}$ be a directed sequence of local quadratic transforms along a zero-dimensional rank one valuation $V$.
In Section~\ref{c6}, we consider the invariant $s = \sum_{i=0}^{\infty} v (\m_i)$.
Granja, Martinez and Rodriguez prove in \cite[Prop. 23]{GMR} that
 $s = \infty$ implies $\bigcup_{n \ge 0}R_n  = V$.
We observe in Proposition~\ref{6.2} that $s$ is finite if $V$ has rational rank at least $2$ and that 
$s = \infty$ or $s < \infty$ are  both possible if $V$ has rational rank one and is not a DVR.
In the case where $V$ has maximal rational rank, we give in Theorem~\ref{6.3} a sharp upper 
bound for $s$,  and show that $s$ attains this bound if and only if the sequence 
switches strongly infinitely often. In Theorem~\ref{htonedirected}, we give necessary and 
sufficient conditions for a sequence $\{ (R_n, \m_n) \}_{n \ge 0}$ to be 
height one directed. This yields  examples where $S = \bigcup_{n \ge 0}R_n$ is a rank 2 valuation
domain with value group $\mathbb Z \oplus H$ such that  $H$ is rational rank one but not discrete.

We use $\mu (I)$ to denote the minimal number of generators of an ideal $I$,
and $\lambda_R(M)$ to denote the length of an $R$-module $M$. We use the 
notation $A \subset B$ to denote that $A$ is a subset of $B$ that may be equal to $B$.

\section{Preliminaries} \label{c2}

\begin{definition} \label{1.1}
Let $R \subseteq T$ be unique factorization domains (UFDs)  with  $R$
and $T$ having the same field of fractions.  Let $p$ be a height-one prime
in $R$. If $pT \cap R = p$, then there exists a unique height-one prime $q$ 
in $T$ such that $q \cap R = p$. We then have $R_p = T_q$. On the other hand,
if $p \subsetneq pT \cap R$, then $(R \setminus p)^{-1}T = \mathcal Q(T)$,
the field of fractions of $T$. The {\bf transform } $p^T$ of $p$ in $T$ is
the ideal $q$ if $pT \cap R = p$ and is the ring $T$ if $p \subsetneq pT \cap R$.
Thus
\begin{equation*}
p^{T} ~ = ~ \begin{cases} q \quad &\text{ if }\quad p=pT \cap R, \\
                           T \quad &\text{ if }\quad p \subsetneq pT \cap R.
\end{cases}
\end{equation*}
Let $I$ be a nonzero ideal of $R$. 
Then $I$ has a unique factorization $I = p_1^{a_1} \cdots p_n^{a_n}J$, where the
$p_i$ are principal prime ideals, the $a_i$ are positive integers, and 
$J$ is an ideal with $J^{-1} = R$.  
\begin{enumerate}
\item The  {\bf transform } $I^T$ of $I$ in $T$ is the ideal 
$$ 
I^T = q_1^{a_1} \cdots q_n^{a_n}(JT)(JT)^{-1}
$$
where $q_i = p_i^T$ for each $i$ with $1 \le i \le n$. 
\item  
The {\bf complete transform} of $I$ in $T$ is the completion $\overline {I^T}$ of $I^T$.
\end{enumerate} 
\end{definition} 

In the case where $J$ is a nonzero principal ideal, the definition here of the transform agrees with the definition given by Granja 
\cite[page~701]{Gr} for the strict transform.

For an ideal $I$ of a local ring $(R,\m)$, the {\bf order} of $I$, denoted $\ord_{R} I$, is $r$ 
if $I \subseteq \m^r$ but $I \nsubseteq \m^{r+1}$.
 If $(R,\m)$ is a regular
local ring, the function that associates to an element $a \in R$,   
the order of the principal ideal $aR$,   defines  a 
discrete rank-one valuation, denoted $\ord_R$  on the field of fractions of $R$.  The associated valuation 
ring (DVR)  is called {\bf the order valuation ring } 
of $R$.

\begin{definition}\label{2.02}
Let $V$ be a valuation domain corresponding to the valuation $v$. The {\bf{rank}} of $v$
is defined to be the Krull dimension of $V$. The {\bf{rational rank}} of $v$, denoted $\rat.rank v$, 
is the rank of the value group $\Gamma_v$ of $V$ over $\mathbb Q$.  Thus,
$\rat.rank v~=~\dim_{\mathbb Q}(\Gamma_{v} \otimes_{\mathbb Z}\mathbb Q)$.
\end{definition}

\begin{definition}\label{2.03}
Let $R$ be an integral domain.  An ideal $I$ of $R$ is said to be a
{\bf{valuation ideal}} if there exists a valuation domain $V$ such that 
$R \subseteq V \subseteq \mathcal Q(R)$ and $IV \cap R~=~I$.
We then say that $I$ is a $V$-{\bf ideal} in $R$,  and if $v$ is a valuation corresponding to $V$ 
that $I$ is a  {\bf{$v$-ideal}} in $R$.
\end{definition}

If $R$ is a subring of a valuation domain  $V$   and  $\m_V$   is the  maximal ideal of $V$,  then 
the prime ideal $\m_V \cap R$ of $R$   is  called  {\bf the center} of $V$ on $R$.
Let $(R,\m)$ be a Noetherian local domain with field of fractions $\mathcal Q(R)$.
A valuation domain  $(V, \m_V)$ is said to {\bf birationally dominate}  $R$ 
if $R \subseteq V \subseteq \mathcal Q(R)$ and  $\m_V  \cap R = \m$, that is, $\m$ is the 
center of $V$ on $R$. We write  $R ~ \overset{b.d.}\subset ~ V$ to denote 
that $V$ birationally dominates $R$.
The valuation domain  $V$ is said
to be a {\bf prime divisor} of $R$ if  $V$ birationally dominates $R$ 
and the transcendence degree of the field $V/\m_V$ 
over $R/\m$ is $\dim R - 1$. If $V$ is a prime divisor of $R$, then $V$ is a DVR
\cite[p. 330]{A}.

\begin{definition}\label{2.04}
Let $R$ be a local domain with maximal ideal $\m$ and 
let  $V$ be a  valuation domain  that  birationally dominates  $R$. 
Let $v$ be the valuation associated with $V$.
The {\bf{dimension}} of $v$ on $R$, or the {\bf dimension} of $V$ over $R$,
  is defined to be the transcendence degree of the residue field $k(v)$ of $v$ over  
the field  $R/\m$.
\end{definition}

The {\bf quadratic dilatation}  or {\bf blowup}  of $\m$ along $V$,  
cf.  \cite[ page~141]{N},  is the
unique local ring on the blowup $\Bl_{\m}(R)$  of $\m$ that is dominated by  $V$.  
The ideal $\m V$ is 
principal and is generated by an element of $\m$.  Let $a \in \m$ be such that $aV = \m V$.  Then
$R[\m/a] \subset V$. Let $Q := \m_V  \cap R[\m/a]$. Then $R[\m/a]_Q$ is the  { \bf  quadratic 
transformation   of }
$R$ {\bf along }  $V$.  In the special case where $(R,\m)$ is a $d$-dimensional regular local domain 
we use the following terminology.

\begin{definition}\label{2.1}
Let $d$ be a positive integer and let $(R, \m, k)$ be a $d$-dimensional regular local ring 
with maximal ideal $\m$ and  residue field $k$. 
Let $x\in \m \setminus \m^2$ and let $S_1 := R[\frac{\m}{x}]$. The ring  $S_1$ is a
$d$-dimensional regular ring in the sense that each localization of $S_1$ at a 
prime ideal is a regular local ring.  To see this, observe that  $S_1/xS_1$ is isomorphic to a 
polynomial ring in $d-1$ variables over the field $k$, cf. \cite[Corollary~5.5.9]{SH},
 and $S_1[1/x] = R[1/x]$ is a regular ring. Moreover,
$S_1$ is a UFD  since $x$ is a prime element of $S_1$ and
$S_1[1/x] = R[1/x]$ is a UFD, cf. \cite[Theorem~20.2]{M}.
Let $I$  be  an $\m$-primary ideal of $R$ with $r:=\ord_{R}(I)$. Then one has in $S_1$
$$
IS_1=x^rI_1 \quad \text{for some ideal}\quad I_1 \quad \text{of}\quad S_1.
$$
It follows  that either $I_1 = S_1$ or $\hgt I_1 \ge 2$. 
Thus  $I_1$ is  the transform $I^{S_1}$  of $I$ in $S_1$ as
in Definition~\ref{1.1}. 

Let $\p$ be a prime ideal of $R[\frac{\m}{x}]$ with $\m \subseteq \p$. 
      The local ring 
$$R_1:~= ~R[\frac{\m}{x}]_{\p}  ~ = (S_1)_{\p}
$$
 is called a {\bf local quadratic transform} of $R$;  the ideal
$I_1R_1$ is the  transform of $I$ in $R_1$ as in Definition~\ref{1.1}.  
\end{definition}

\begin{remark} \label{eqloc} Let $(R,\m)$ be a regular local ring and let
$(R_1, \m_1)$ be a local quadratic transform of $R$ as in Definition~\ref{2.1}. 
If $\q$ is a nonzero prime ideal of $R_1$ such that $\q \cap R =: \p$ is properly
contained in $\m$, then $R_{\p} = (R_1)_{\q}$. 
\end{remark} 
\begin{proof}  We have  $R_{\p} \subseteq (R_1){\q}$, and $(R_1){\q}$
dominates $R_{\p}$. 
Since $R_1$ is a localization of $S_1 = R[\frac{\m}{x}]$,
we have $xR_1 \cap R = \m$. Hence $\q \cap R \ne \m$ implies that $x \notin \p$.
Thus $R[\frac{1}{x}] \subset R_{\p}$, and so $R \subset S_1  \subset R_{\p} \subseteq (R_1){\q}$.
Since $(R_1)_{\q}$ is a localization of $S_1$ and $\q(R_1)_{\q} \cap S_1 = \p R_{\p} \cap S_1$,
we have $R_{\p} = (R_1)_{\q}$.
\end{proof}

\begin{definition}\label{changedir}
Let $(R, \m)$ be a $d$-dimensional regular local ring and consider a sequence of local quadratic transforms along $R$,
	$$R = R_0 \subset R_1 \subset \ldots \subset R_n.$$
We say {\bf there is a  change of direction }  from $R_0$ to $R_n$  if $\m_0 \subset \m_n^2$.
\end{definition}

\begin{remark}\label{6.4}
Assume the notation of Definition~\ref{changedir}.
Let $V$ be a valuation domain birationally dominating $R_n$ and let $v$ be a valuation
associated to $V$. 
There is a change of direction from $R_0$ to $R_n$ if and only if $v (\m_0) > v (\m_{n-1})$.
\end{remark}

\begin{proof}
It suffices to prove the case where  $n = 2$.
Assume that $v (\m_0) = v (\m_1)$. We show there is no change of direction from $R_0$ to $R_2$. 
Let $x \in \m_0$ be such that $v (x) = v (\m_0)$.
Then $R_2$ is a localization of $R_1 [ \frac{\m_1}{x} ]$.  
Hence $x \in \m_2 \setminus \m_2^2$.

Conversely, assume that $v (\m_0) > v (\m_1)$. We show there is a 
change of direction from $R_0$ to $R_2$.  Let $x \in \m_0$. Let 
$w \in \m_1$ be such that $v (w) = v (\m_1)$. 
Then  $R_2$ is a localization of $R_1 [ \frac{\m_1}{w} ]$.
Since $v (x) \ge v (\m_0) > v (\m_1) = v (w)$, 
we have $\frac{x}{w} \in \m_2$.  Thus  $x = w \frac{x}{w} \in \m_2^2$.
We conclude that $\m_0 \subset \m_2^2$. 
\end{proof}

\section{Directed unions of local quadratic transforms}

\begin{remark} \label{3.25} 
If $(R,\m)$ is a 2-dimensional regular local ring, then a well-known result 
of Abhyankar \cite[Lemma12]{A} states that an infinite directed union of 
local quadratic transforms of $R$ is always a valuation domain. Examples 
given by Shannon in \cite[Examples~4.7 and 4.17]{S} show that this fails
in general if $d \ge 3$. 
\end{remark}

\begin{remark}  \label{switch}  
Let $\{( R_n, \m_n) \}_{n \ge 0}$ be an infinite directed family of local quadratic transforms of a regular local ring $R$. 
As shown by Granja in \cite[Lemma~10]{Gr},  the following are equivalent:
\begin{enumerate}
\item The sequence $\{ (R_n, \m_n) \}_{n \ge 0}$ switches strongly infinitely often.
\item For each  integer $j \ge 0$ and nonzero  element $f \in R_j$, there exists a positive integer  $n \ge j$ 
such that  the transform $(f R_j)^{R_n} = R_n$.
\end{enumerate}
\end{remark}

Let $V$ be a valuation domain  birationally dominating a regular local ring
$(R,\m)$ and let $\{(R_n,\m_n)\}_{n \ge 0}$ be the family of local quadratic transforms of $R$ along $V$.
Granja    in \cite[Theorem~13]{Gr} proves the following:  

\begin{remark}  \label{rank}  Let $\{(R_n, \m_n)\}_{n \ge 0}$ be an infinite directed family of local quadratic transforms of a regular local ring  $R$.
If  $\{ R_n \}_{n \ge 0}$  switches strongly infinitely often, 
then $S := \bigcup_{n=0}^{\infty} R_n$ is a rank one  valuation domain.
\end{remark}

\begin{proof}
To prove that $S$ is a valuation domain, it suffices to show for nonzero  
elements $f, g \in R := R_0$  
that either $f / g$ or $g / f$ is in $R_{n}$ for some $n > 0$.
Let $I = (f, g) R$.
We prove the assertion  by induction on $\ord_{R} (I)$.
The case where $\ord_{R} (I) = 0$ is clear.
Let   $\ord_{R} (I) = r > 0$,   and assume the assertion holds for nonnegative 
integers less than $r$.  
We may assume that $r = \ord_{R} (f) \le \ord_{R} (g)$.
Let  $I^{R_1} = (f_1, g_1)R_1$, where $\frac{f_1}{g_1} = \frac{f}{g}$, denote 
the transform of $I$ in $R_1$. 
Since $r = \ord_{R} (f) = \ord_{R}(I)$, the ideal  $f_1R_1$  is the transform of $fR$ in $R_1$. 
If $\ord_{R_1} (f_1) < r$ or $\ord_{R_1} (g_1) < r$, then we are done by 
the induction hypothesis.
Otherwise, $\ord_{R_1} (I_1) = \ord_{R_1} (f_1) = r$ and $I_1 := (f_1,g_1)R_1$ 
satisfies the same hypotheses as $I$.
Let $I^{R_2} = (f_2,g_2)R_2$,  where $\frac{f_2}{g_2} = \frac{f}{g}$, 
denote the transform of $I$ in $R_2$, and
continue in this way to define $I_n = (f_n, g_n)R_n$.   
Since the sequence switches strongly infinitely often, there  exists a 
positive integer $n$ such that  either $\ord_{R_n}(g_n) < r$ or
the strict transform $f_n$ of $f$ in $R_n$ 
has $\ord_{R_n} (f_n) < r$.   This proves that 
$S = \bigcup_{n=0}^{\infty} R_n$ is a valuation domain.

Assume that  $\dim S > 1$ and let $P$ be a nonzero nonmaximal prime ideal  of $S$.  
For each $n \ge 0$, let $P_n := P \cap R_n$.   Then  $P = \bigcup_{n \ge 0}P_n$
and $S_P = \bigcup_{n \ge 0} (R_n)_{P_n}$.   
There exists a positive integer $t$ for which $P_t$ is a nonzero, 
nonmaximal prime ideal of the regular local ring $R_t$.  
For each positive integer $i$,  we have  
$P_{t+i} \cap R_t = P_t$.  Therefore by Remark~\ref{eqloc}  
 $(R_t)_{P_t} = (R_{t+i})_{P_{t+i}}$.  It follows that 
$S_P = (R_t)_{P_t}$,  a contradiction to our hypothesis that $\{R_n\}_{n \ge 0}$ switches strongly infinitely often.  
 We conclude
that $\dim S = 1$ and $S$ is a rank one valuation domain.
\end{proof}

We use the following setting to observe in  Theorem~\ref{valideals} 
a connection between  valuation ideals 
and  an infinite directed family that switches strongly infinitely often. 

\begin{setting} \label{validealset} 
Let $\{(R_n, \m_n)\}_{n \ge 0}$ be an infinite directed family of local quadratic transforms of a regular local ring  $R$.  
Assume that  $\{(R_n, \m_n)\}_{n \ge 0}$  switches strongly infinitely often and 
let $V = \bigcup_{n \ge 0}R_n$. As noted in Remark~\ref{rank}, $V$ is a rank one 
valuation domain. Let $v$ be a valuation associated to $V$. By  \cite[Lemma~3, p. 343]{ZS2}, 
the $V$-ideals of $R$ form a descending sequence $\{ I_n \}_{n \ge 0}$,  and $\bigcap_{n \ge 0}I_n = (0)$.
 \end{setting}

\begin{theorem} \label{valideals}
Assume notation as in Setting~\ref{validealset}.  
Then there exists  an ascending sequence of integers $\{ \tau_n \}_{n \ge 0}$ 
such that for each valuation domain $W$ that  birationally dominates $R_{\tau_n}$, 
the ideals $I_1, \ldots, I_n$ are valuation ideals for $W$.
\end{theorem}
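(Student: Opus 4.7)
The approach is to choose $\tau_n$ in two stages: first to ensure $I_k R_{\tau_n}$ is principal for each $k \le n$, and then to enlarge further so that the principal generator interacts compatibly with every valuation that birationally dominates $R_{\tau_n}$.

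For the first stage, observe that each $V$-ideal $I_k$ is $\m$-primary: $V$ is rank one and dominates $R$, so $v(\m^\ell) = \ell \cdot v(\m) \to \infty$ as $\ell \to \infty$, forcing $\m^\ell \subseteq I_k$ for large $\ell$. Since $I_k$ is finitely generated and $V$ is a valuation ring, $I_k V = c_k V$ is principal; I choose $c_k \in I_k$ with $v(c_k) = v(I_k)$. Writing each generator $a_i$ of $I_k$ as $a_i = c_k u_i$ with $u_i \in V = \bigcup R_n$, I pick $M_k^{(1)}$ large enough that all $u_i \in R_{M_k^{(1)}}$; the $u_i$ corresponding to the $a_i$ with $v(a_i) = v(I_k)$ has $v(u_i) = 0$, hence is a unit in $R_{M_k^{(1)}}$ by domination, so $I_k R_{M_k^{(1)}} = c_k R_{M_k^{(1)}}$.

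For the second stage, further enlarge to $M_k$ so that $c_k/r \in R_{M_k}$ for every $r \in R \setminus I_k$, and set $\tau_n = \max_{k \le n} M_k$. Given any valuation $W$ birationally dominating $R_{\tau_n}$ and any $k \le n$, the inclusion $I_k \subseteq I_k W \cap R$ is immediate from $I_k R_{\tau_n} = c_k R_{\tau_n} \subseteq W$. For the reverse, suppose $r \in I_k W \cap R$ but $r \notin I_k$; then $v(r) < v(c_k)$ gives $c_k/r \in \m_V \cap R_{\tau_n} = \m_{\tau_n} \subseteq \m_W$, whereas $r/c_k \in W$. The identity $(c_k/r)(r/c_k) = 1$ then exhibits $c_k/r$ as a unit of $W$ while belonging to $\m_W$, a contradiction; hence $r \in I_k$ and $I_k W \cap R = I_k$.

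The main obstacle is securing the uniform enlargement in the second stage, since a priori the stage $N(r)$ with $c_k/r \in R_{N(r)}$ depends on $r$ and $R \setminus I_k$ is infinite when the residue field is infinite. To overcome this, I would first observe that the condition $r/c_k \in W$ depends only on the class $\overline{r}$ in $R/I_k$, because $r - r' \in I_k \subseteq c_k R_{\tau_n}$ gives $(r - r')/c_k \in R_{\tau_n} \subseteq W$; and $R/I_k$ has finite $R$-length since $I_k$ is $\m$-primary. For any fixed dominating $W$, the $w$-filtration of $R/I_k$ then has at most $\lambda_R(R/I_k)$ distinct jumps, so only finitely many ``level classes'' require control. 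Uniformity across all dominating $W$ should then follow from the rank-one approximation of Theorem~\ref{approx}: for each relevant class the ratios $\ord_{R_n}(r)/\ord_{R_n}(c_k)$ converge to $v(r)/v(c_k) < 1$, and a compactness argument on the shrinking cone of dominating valuations of $R_n$ should produce a uniform $n$ beyond which $w(r) < w(c_k)$, equivalently $c_k/r \in \m_{\tau_n}$, holds simultaneously for every dominating $W$ and every relevant representative.
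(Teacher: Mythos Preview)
Your Stage~1 is correct and is in fact the only enlargement the paper makes: one simply takes $\tau_{n+1}\ge\tau_n$ large enough that $I_nR_{\tau_{n+1}}$ is principal. The gap is entirely in Stage~2. Your reduction ``$r/c_k\in W$ depends only on $\bar r\in R/I_k$'' is valid, but finite \emph{length} of $R/I_k$ does not give finitely many elements when the residue field is infinite, and the ``level classes'' you invoke depend on the particular $W$, so they cannot serve as a $W$-independent finite test set. The appeal to Theorem~\ref{approx} does not close the gap either: that theorem controls $\ord_{R_n}$-ratios relative to the fixed valuation $v$, not $w$-ratios for an arbitrary $W$ dominating $R_n$, and no compactness of the space of such $W$ is established anywhere. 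As written, Stage~2 is a hope rather than an argument.

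The paper sidesteps this uniformity problem entirely by exploiting the \emph{consecutive} structure of the chain $\{I_n\}$. The inductive step is: assume $I_n$ is already a $W$-ideal for every $W$ dominating $R_{\tau_{n+1}}$, and that $I_nR_{\tau_{n+1}}=gR_{\tau_{n+1}}$ is principal. Set $J=I_{n+1}W\cap R$; then $I_{n+1}\subset J\subset I_nW\cap R=I_n$. If $f\in J\setminus I_{n+1}$, then $f\in I_n\setminus I_{n+1}$ forces $v(f)=v(I_n)$, so $f$ is another generator of the principal ideal $I_nR_{\tau_{n+1}}$. Hence $I_n\subset fR_{\tau_{n+1}}\subset fW\subset I_{n+1}W$, giving $J=I_n$. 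But $I_{n+1}R_{\tau_{n+1}}=fL$ for some proper ideal $L\subset\m_{\tau_{n+1}}$, so $w(L)>0$; yet $f\in J=I_{n+1}W\cap R$ means $f\in fLW$, i.e.\ $1\in LW$, a contradiction. The point is that by working one step at a time and using that $I_{n+1}$ is the \emph{immediate successor} of $I_n$ among $V$-ideals, every element of $J\setminus I_{n+1}$ is forced to have the single $v$-value $v(I_n)$, reducing the problem to the one finite condition ``$I_nR_{\tau_{n+1}}$ principal'' that your Stage~1 already secured. No uniformity over $R\setminus I_k$ is ever needed.
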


\begin{proof}
We  construct the sequence $\{ \tau_n \}_{n \ge 0}$ by induction on $n$.
Suppose we have constructed the sequence up to length $n$.
Since the sequence of local quadratic transforms switches strongly infinitely often, 
there exists an integer $\tau_{n+1} \ge \tau_n$ such that $I_{n} R_{\tau_{n+1}}$ is principal,
 say $I_{n} R_{\tau_{n+1}} = g R_{\tau_{n+1}}$, where $g \in I_n$.
Since $V$ birationally dominates $R_{\tau_{n+1}}$, it is clear that $v (g) = v (I_n)$.
Notice that we may take $g$ to be any element in $I_n$ such that $v (g) = v (I_n)$.
Let $h$ be another such element.
Since $h \in g R_{\tau_{n+1}}$, $\frac{h}{g} \in R_{\tau_{n+1}}$.
Since $V$ birationally dominates $R_{\tau_{n+1}}$ and $v (g) = v (h)$, the element
$\frac{h}{g}$ is a unit in $R_{\tau_{n+1}}$, so $g R_{\tau_{n+1}} = h R_{\tau_{n+1}}$.

Let $W$ birationally dominate $R_{\tau_{n+1}}$. 
Then $W$ birationally dominates $R_{\tau_n}$.
By the induction hypothesis, $I_0, \ldots, I_n$ are $W$-ideals.
To see that $I_{n+1}$ is a $W$-ideal, consider $J = I_{n+1} W \cap R_0$.
Since $I_{n+1} \subsetneq I_n$ and $I_n$ is a $W$-ideal, 
we have $I_{n+1} \subset  J \subset  I_n$.
Suppose by way of contradiction that $J \ne I_{n+1}$.
Let $f \in J \setminus I_{n+1}$.
Since $I_{n+1} = \{ a \in R_0 \mid v (a) > v (I_n) \}$, it follows that  $v (f) \le v (I_n)$.
Because $f \in I_n$, we have  $v (f) = v (I_n)$.
Thus $I_n R_{\tau_{n+1}} = f R_{\tau_{n+1}}$, so $J = I_n$.
Since $I_{n+1} \subsetneq I_n$ and $I_n R_{\tau_{n+1}} = f R_{\tau_{n+1}}$, 
there exists a  proper ideal $L$ of $R_{\tau_{n+1}}$ such that $I_{n+1} R_{\tau_{n+1}} = f L$.
Since $L$ is a proper ideal of $R_{\tau_{n+1}}$ and $W$ dominates $R_{\tau_{n+1}}$, 
we have $w (L) > 0$.
Since $J = f L W \cap R$ and $f \in J$, it follows that 
 $f \in f L W$, which contradicts the fact that $w (L) > 0$.
\end{proof}

\begin{remark}\label{validealrem}
Let $(R, \m)$ be a Noetherian local domain and let $V$ be a valuation domain birationally 
dominating $R$ 
such that the vector space dimension  of $V/\m_V$ over $R/\m$ denoted 
$\dim_{R / \m} V / \m_V = e < \infty$.
Let $\{ I_n \}_{n \ge 0}$ be the sequence of $V$-ideals of $R$.
Then $1 \le \lambda_{R} (I_n / I_{n+1}) \le e$ for all $n \ge 0$.
To see this, fix an integer $n$, and consider the natural embedding as $R$-modules,
$$
\frac{I_n}{I_{n+1}} ~ \cong ~\frac{I_n V \cap R}{\m_V I_n V \cap R} ~ \hookrightarrow ~\frac{I_n V}{\m_V I_n V} ~\cong ~ \frac{V}{ \m_V},
$$
where the last isomorphism follows because $I_n V = fV$, for some element $f \in I_n$. 
Thus there is an $R$-module embedding of $I_n / I_{n+1}$ into $V / \m_V$, and hence we have 
 $0 < \lambda_{R} (I_n / I_{n+1}) \le \lambda_{R} (V / \m_V) = e$.
In particular,  if $R / \m = R_n / \m_n$ for all $n \ge 0$ and $\{ I_n \}_{n \ge 0}$ is
the sequence of $V$-ideals in $R$, then  $\lambda_{R} (I_n / I_{n+1}) = 1$ for all $n \ge 0$.
\end{remark}

\begin{remark}
Let $R$ and the family $\{(R_n,\m_n)\}_{n \ge 0}$ be as in Theorem~\ref{valideals}. It is natural to 
ask about conditions on the integer $\tau_n$ given in Theorem~\ref{valideals}. The smaller
the integer $\tau_n$, the sharper the assertion. We describe the situation in more detail 
in Discussion~\ref{discuss1} 
\end{remark}

\begin{discussion} \label{discuss1}
Let the ring  $R$, the family of local quadratic transforms  $\{(R_n,\m_n)\}_{n \ge 0}$,
and the descending 
chain of ideals $\{I_n\}_{n \ge 0}$  be as in Theorem~\ref{valideals}. Let $\mathcal S$ denote the
set of valuation domains that birationally dominate $R$.
For each integer $n \ge 0$,
let
$$
\begin{aligned}
\mathcal S_n  ~ &= ~ \{ W \in \mathcal S ~|~  
I_0, I_1, \ldots,  I_n  \quad \text{ are all $W$-ideals} \}, \\
\mathcal T_n  ~ &= ~ \{ W \in \mathcal S ~|~  
R_{n} ~ \overset{b.d.}\subset ~ W  \}.
\end{aligned}
$$
Notice that $\mathcal S_0 = \mathcal S_1 = \mathcal S = \mathcal T_0$ and the families  
$\{\mathcal S_n \}$ and $\{\mathcal T_n \}$ are both linearly ordered with respect to
inclusion and decreasing. Since $I_2 = P_{RR_1}$ is the special $*$-simple complete 
ideal in $R$ associated to $R_1$ as defined by Lipman in \cite[Proposition~2.1]{L}, 
we also have $\mathcal S_2 = \mathcal T_1$. 
Associated with these sequences, we define:
$$
\begin{aligned}
s_n  ~ &= ~ \min \{~ j \in \N_0 ~|~  \mathcal T_j ~\subseteq \mathcal S_n \}. \\
t_n  ~ &= \begin{cases} ~ \min \{~ j \in \N_0 ~|~  \mathcal S_j ~\subseteq \mathcal T_n \},\\
                ~ \infty ~\text{ if $\mathcal S_j$ is not contained in $\mathcal T_n$ for all $j$. }
\end{cases}
\end{aligned}
$$
We have $s_0 = 0 = t_0$ and $s_1 = 0$,  while $t_1 = 2$ and $s_2 = 1$.  
The sequences $\{s_n\}$ and $\{t_n\}$ are increasing in the sense that $s_n \le s_{n+1}$
and $t_n \le t_{n+1}$ for each $n \ge 0$.

Theorem~\ref{valideals} implies that for each positive integer $n$ there
exists a positive integer $\tau_n$ such that 
$$ 
\mathcal T_{\tau_n} ~ \subseteq ~ \mathcal S_n \quad \text{that is, } \quad 
W ~\in ~\mathcal T_{\tau_n} ~~ \implies ~~ W ~\in \mathcal S_n.
$$
Thus    $\tau_n$ is  any integer such that $\tau_n \ge s_n$.     The proof of Theorem~\ref{valideals}
also proves that:
\begin{equation} \label{discuss2}
s_n~\le ~ \min \{ ~j \in \mathbb N_0 ~| ~ I_{\mu}R_j \text{ is principal  for all  $\mu$ with  }  0 \le   \mu \le n-1  \}.
\end{equation}

\end{discussion}

 Theorem~\ref{approx}   applies to a directed family   $\{( R_n, \m_n) \}_{n \ge 0}$  of Noetherian local domains 
 such that $\bigcup_{n=0}^{\infty} R_n$ is a rank one valuation domain.

\begin{theorem} \label{approx}
Let $\{( R_n, \m_n) \}_{n \ge 0}$ be an infinite family of Noetherian local domains such that $R_n$ is a subring 
of $R_{n+1}$ for each integer $n \ge 0$.
Assume that $\ord_{R_n}$ defines a valuation for each integer $n \ge 0$, and  
that $\bigcup_{n=0}^{\infty} R_n = V$ is a rank one valuation domain.
Let $v$ denote a valuation associated to $V$,  and   let 
$f, g$ be nonzero elements in the maximal ideal of $V$.
Then for $n$ sufficiently large, $f$ and $g$ are in $\m_n$, and we have
$$
\frac{v (f)}{v (g)} ~= ~ \lim_{n \rightarrow \infty} \frac{\ord_{R_n} (f)}{\ord_{R_n} (g)}.
$$
\end{theorem}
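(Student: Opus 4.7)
The plan is to identify the limit by comparing $v(f)/v(g)$ to arbitrary rational numbers via a Dedekind cut and then transferring each such comparison to $\ord_{R_n}(f)/\ord_{R_n}(g)$ for $n$ sufficiently large. Write $w_n := \ord_{R_n}$; by hypothesis each $w_n$ extends uniquely to a valuation on $\mathcal Q(R_n)$ via $w_n(a/b) = \ord_{R_n}(a) - \ord_{R_n}(b)$. Let $\alpha := v(f)/v(g)$, noting that $v(g) > 0$ because $g$ is a nonzero element of the maximal ideal of $V$ and $V$ has rank one, so $\alpha$ is a well-defined positive real number.

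Next I would record one simple principle that drives the whole argument: for every $h \in V = \bigcup_n R_n$ with $v(h) > 0$, the element $h$ lies in $\m_n$ for all sufficiently large $n$. Indeed, $h \in R_n$ for $n \ge$ some $N$; and for each such $n$, if $h$ were a unit in $R_n$ then $h^{-1} \in R_n \subseteq V$ would have $v(h^{-1}) = -v(h) < 0$, contradicting $R_n \subseteq V$. Applied to $f$ and $g$ this gives $f, g \in \m_n$ eventually, so $w_n(f), w_n(g) \ge 1$ and the ratio $w_n(f)/w_n(g)$ is well defined.

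The core of the argument is then the transfer step. Fix a rational $p/q$ with $p, q$ positive integers. The inequality $p/q > \alpha$ is equivalent to $v(g^p/f^q) = p\,v(g) - q\,v(f) > 0$, so by the principle above $g^p/f^q \in \m_n$ for all $n$ large, whence $w_n(g^p/f^q) \ge 1 > 0$ and therefore $w_n(f)/w_n(g) < p/q$. Symmetrically, $p/q < \alpha$ gives $f^q/g^p \in \m_n$ for $n$ large, hence $w_n(f)/w_n(g) > p/q$. Given $\varepsilon > 0$, choose rationals $p_1/q_1 < \alpha < p_2/q_2$ with $p_2/q_2 - p_1/q_1 < \varepsilon$; for $n$ large enough both comparisons transfer, squeezing $w_n(f)/w_n(g)$ within $\varepsilon$ of $\alpha$, which is the claimed limit.

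I do not anticipate a serious obstacle: the entire argument rests on the rank-one hypothesis for $V$, which embeds the value group into $\mathbb R$ and makes the Dedekind-cut technique available, together with the trivial observation that no element of positive $v$-value can be a unit in a subring of $V$. The hypothesis that each $\ord_{R_n}$ is already a valuation is used precisely so that the inequality $w_n(g^p) > w_n(f^q)$ is equivalent to $w_n(f)/w_n(g) < p/q$, rather than merely implying one direction.
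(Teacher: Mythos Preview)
Your argument is correct and is essentially the same Dedekind-cut/rational-sandwich idea as the paper's proof: both trap $v(f)/v(g)$ between nearby rationals, translate each inequality into a statement about an element of positive $v$-value lying in $V$, and then observe that this element must lie in $\m_n$ (or $R_n$) for large $n$, forcing the corresponding inequality for $\ord_{R_n}(f)/\ord_{R_n}(g)$. The only cosmetic differences are that the paper uses the specific pair $p/q \le L < (p+1)/q$ rather than two arbitrary rationals, and you spell out more explicitly why an element of positive $v$-value must be a nonunit in $R_n$.
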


\begin{proof}
Let  $L = \frac{v (f)}{v (g)}$ and let  $\epsilon > 0$. There exist
positive integers $p$ and $q$ such that 
$$ 
L - \epsilon ~< ~  \frac{p}{q} ~  \le  ~L ~ < ~ \frac{p+1}{q} ~ < ~ L + \epsilon.
$$
Then 
$$
\frac{p}{q} ~\le ~ L = \frac{v(f)}{v(g)} ~ < ~ \frac{p+1}{q}
$$
implies that
$$
pv(g) ~ \le ~ qv(f) ~< ~(p+1)v(g) \qquad \text{ and } \qquad v(g^p)~ \le ~ v(f^q) ~ < ~v(g^{p+1}).
$$  
For $n$ sufficiently large, we have   $\frac{f^q}{g^p}, \frac{g^{p+1}}{f^q} \in R_n$.  Therefore
	$$\ord_{R_n} (g^p) \le \ord_{R_n} (f^q) < \ord_{R_n} (g^{p+1}).$$
It follows that 
	$$\frac{p}{q} \le \frac{\ord_{R_n} (f)}{\ord_{R_n} (g)} < \frac{p+1}{q}.$$
We conclude that 
	$$L - \epsilon < \frac{\ord_{R_n} (f)}{\ord_{R_n} (g)} < L + \epsilon$$
for all sufficiently large integers $n$.
\end{proof}

Corollary~\ref{approxcor} is immediate from Theorem~\ref{approx}.  

\begin{corollary}   \label{approxcor}
Let $\{(R_n, \m_n)\}_{n \ge 0}$ be an infinite directed family of local quadratic transforms of a regular local ring  $R$.
Assume that $\{ (R_n, \m_n) \}_{n \ge 0}$ switches strongly infinitely often, 
so  $V = \bigcup_{n = 0}^{\infty} R_n$ is a rank $1$ valuation.
Let $v$ be a valuation associated to $V$.
Then for nonzero $f, g \in \m$,  we have
$$
\frac{v (f)}{v (g)} ~= ~ \lim_{n \rightarrow \infty} \frac{\ord_{R_n} (f)}{\ord_{R_n} (g)}.
$$
\end{corollary}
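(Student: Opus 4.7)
The plan is to show that Corollary~\ref{approxcor} follows by directly verifying that the hypotheses of Theorem~\ref{approx} hold in this more specialized setting and then quoting the theorem. The three conditions to check are: (i) $\{(R_n,\m_n)\}_{n\ge 0}$ is an infinite family of Noetherian local domains with $R_n \subset R_{n+1}$, (ii) $\ord_{R_n}$ defines a valuation on the field of fractions for each $n$, and (iii) $V = \bigcup_{n\ge 0} R_n$ is a rank one valuation domain.

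First I would note that (i) is immediate from the definition of a local quadratic transform: each $R_n$ is a localization at a maximal ideal of $R_{n-1}[\m_{n-1}/x_{n-1}]$ for some $x_{n-1} \in \m_{n-1}\setminus\m_{n-1}^2$, and the directed union structure gives the inclusion $R_n \subset R_{n+1}$. Each $R_n$ is in fact a $d$-dimensional regular local ring, as recorded in Definition~\ref{2.1}, and in particular is Noetherian. For (ii), the key point is that the order function of any regular local ring defines a discrete rank one valuation on its field of fractions, which is precisely the order valuation $\ord_{R_n}$ recalled just after Definition~\ref{1.1}. Hence $\ord_{R_n}$ is a valuation for every $n$.

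For condition (iii), since the directed family $\{(R_n,\m_n)\}_{n\ge 0}$ switches strongly infinitely often by hypothesis, Remark~\ref{rank} applies and yields that $V = \bigcup_{n\ge 0} R_n$ is a rank one valuation domain. All three hypotheses of Theorem~\ref{approx} are therefore met. Given nonzero $f,g \in \m = \m_0$, we certainly have $f,g \in \m_n$ for all $n \ge 0$ since the sequence of local rings is dominated by $V$ and $f,g$ lie in the maximal ideal of $V$. Applying Theorem~\ref{approx} directly gives the equality
$$\frac{v(f)}{v(g)} = \lim_{n \to \infty} \frac{\ord_{R_n}(f)}{\ord_{R_n}(g)}.$$

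There is essentially no obstacle here; the statement is a direct specialization of Theorem~\ref{approx} to the case of local quadratic transforms of a regular local ring, with the switching-strongly-infinitely-often hypothesis serving only to guarantee, via Remark~\ref{rank}, that the directed union is in fact a rank one valuation domain.
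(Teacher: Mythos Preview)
Your proposal is correct and follows exactly the approach the paper takes: the paper simply states that Corollary~\ref{approxcor} is immediate from Theorem~\ref{approx}, and your proof spells out precisely why the hypotheses of that theorem are satisfied (each $R_n$ is regular local so $\ord_{R_n}$ is a valuation, and Remark~\ref{rank} gives that $V$ is rank one). There is nothing to add.
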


\section{Directed unions of monomial local quadratic transforms}\label{c3}

\begin{definition} \label{3.1}
Let $(R, \m)$ be a  $d$-dimensional regular local ring  with $d \ge 2$,   and fix 
$d$ elements $x,y,\ldots, z$ such that 
$\m:=(x, y, \ldots, z)R$.  
An element $p \in R$ is called a {\bf {monomial} in $x, y, \ldots, z$}
if there exists $(\alpha, \beta, \ldots, \gamma)\in \mathbb N^d_0$ such that $p=x^{\alpha}y^{\beta}\cdots z^{\gamma}$.

 An ideal $I$ of $R$  
is said to be a {\bf monomial ideal}  if $I$ is generated by monomials.  
     Let 
$$
^xA ~= ~ R \big[ \frac{\m}{x} \big] = R \big[ \frac{y}{x}, ~\ldots, ~\frac{z}{x} \big] \qquad x_1 ~:= ~x,~\quad y_1:~= ~\frac{y}{x},~\quad \ldots,~\quad~z_1~:=\ ~\frac{z}{x}.
$$
If $I$ is a monomial ideal in $R$,   the transform of $I$ in $^xA$ is  generated by elements of the
form $x_1^ay_1^b \cdots z_1^c $ with $a,b,\ldots, c \in \N_0$.  
This motivates us to define an ideal $J$ of $^xA$ to be
a {\bf monomial ideal}  if $J$ is generated by monomials in 
$x_1, y_1, \ldots, z_1$.   We consider  
monomial quadratic transformations of $R$ defined as follows:   the ring 
$^xR=R\big[\frac{\m}{x}\big]_{(x,~ \frac{y}{x},~\ldots,~\frac{z}{x})}$ is a 
{\bf monomial local quadratic transformation}  of $R$ {\bf in the  $x$-direction}.
An ideal $J$ of $^xR$ is said to be a {\bf monomial ideal} if $J$ is generated by
monomials in $x_1, y_1, \ldots, z_1$. 

In a similar manner, we define   $^yR$, $\ldots$, $^zR$ to be the  
{\bf local  monomial quadratic transformations}  
of $R$ in the {\bf  $y$-direction}, $\ldots$, {\bf  $z$-direction}, respectively, where   
$$ 
^yR~=  ~R\big[\frac{\m}{y}\big]_{(\frac{x}{y},~y,~\ldots,~\frac{z}{y})}, \quad~
\ldots, \quad~
^zR=R\big[\frac{\m}{z}\big]_{(\frac{x}{z},~ \frac{y}{z},~\ldots,~z)}.
$$
We define an ideal of $^yR$, $\ldots$, $^zR$ to be a {\bf monomial ideal} if it is 
generated by monomials in the respective rings.  We refer to the elements in the  fixed set of 
minimal generators of the regular local ring as
{\bf variables}.

For a monomial ideal $I$ of one of these rings, there exists a unique set of 
monomial minimal generators of $I$, \cite[Corollary~4]{KS}. We 
 let  $\Delta (I)$ 
denote the set of monomial minimal generators of $I$.

\end{definition}

\begin{setting}\label{3.2a} 
Let $(R,\m)$ and $x, y, \ldots, z$ be as in Definition~\ref{3.1}. 
   For $n$ a positive integer,   consider a  sequence of regular local rings 
\begin{equation}
\{(R_i, \m_i)\}_{i =0}^n~\equiv~R  ~=: ~  R_0 ~\subset~ R_1 ~\subset~ R_2~ \subset \cdots \subset~ R_n ,
\end{equation}
where $R_{i+1}$ is a  monomial  local quadratic transformation of $R_i$,  for each $i < n$ 
$$ 
\m_i=(x_i, y_i, \ldots, z_i)R_i \quad \text{ and } \quad 
\m_{i+1}=(x_{i+1}, y_{i+1}, \ldots, z_{i+1})R_{i+1},
$$
and where one of the following $d$-cases occur:
$$
x_{i+1}:=x_i,\quad y_{i+1}:=\frac{y_i}{x_i},\quad \cdots \quad ,z_{i+1}:=\frac{z_i}{x_i},
$$
in which case we say the transform from $R_i$ to $R_{i+1}$ is {\bf{in the $x$-direction}}, or
$$
x_{i+1}:=\frac{x_i}{y_i},\quad y_{i+1}:=y_i,\quad \cdots \quad ,z_{i+1}:=\frac{z_i}{y_i}, 
$$
in which case we say the transform from $R_i$ to $R_{i+1}$ is {\bf{in the $y$-direction}}, or
$$\vdots$$
$$
x_{i+1}:=\frac{x_i}{z_i},\quad y_{i+1}:=\frac{y_i}{z_i},\quad ~\cdots \quad ,z_{i+1}:=z_i, 
$$
in which case we say the transform from $R_i$ to $R_{i+1}$ is {\bf{in the $z$-direction}}.

Let $\mathfrak D_{x}$(respectively,$~ \mathfrak D_{y},~\ldots, ~ \mathfrak D_{z}$) 
 denote the set of  integers $i   \in \{0, 1, \ldots, n-1\}$  for which the transform from $R_i$ to $R_{i+1}$ 
is in the $x$-direction (respectively, the $y$-direction, ~\ldots, ~ the $z$-direction).
\end{setting}

\begin{theorem}\label{3.3a}
Let notation be as in Setting~\ref{3.2a}.  The following are equivalent:  
\begin{enumerate} 
\item  The sets  $\mathfrak D_{x}, ~\mathfrak D_{y},~\ldots,~ \mathfrak D_{z}$ are all nonempty.
\item  For every nonzero $f \in \m$, we have  $\ord_{R_n}(fR)^{R_n}  <  \ord_R(f)$.
\end{enumerate}
\end{theorem}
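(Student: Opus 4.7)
The plan is to prove both implications by contrapositive, tracking the initial forms of the strict transforms of $f$ through the sequence. For each $i$, let $g_i$ generate the principal ideal $(fR)^{R_i}$, set $r_i = \ord_{R_i}(g_i)$, and let $F_i \in k[X_i,Y_i,\ldots,Z_i]$ denote the initial form of $g_i$ in $\gr(R_i)_{r_i}$, where $k = R/\m = R_i/\m_i$ (monomial transforms induce no residue field extension). Write $\mathrm{var}(F_i) \subseteq \{x,y,\ldots,z\}$ for the set of direction labels whose associated variable appears in at least one monomial of $F_i$.

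For $(2) \Rightarrow (1)$, I argue the contrapositive: suppose some direction, say $x$, is never used, and take $f = x$, so $r_0 = 1$. A direct induction shows $(xR)^{R_i} = x_i R_i$: at each step the direction is some $u \ne x$, giving $x_i = x_{i+1} u_{i+1}$ in $R_{i+1}$, and factoring out $u_{i+1}$ from $x_i R_{i+1} = u_{i+1} x_{i+1} R_{i+1}$ yields $(xR)^{R_{i+1}} = x_{i+1} R_{i+1}$. Hence $\ord_{R_n}((xR)^{R_n}) = 1 = \ord_R(x)$, contradicting (2).

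For $(1) \Rightarrow (2)$, I again use the contrapositive: assume $\ord_{R_n}((fR)^{R_n}) = r_0$ and deduce that some direction is never used. Suppose the step $R_i \to R_{i+1}$ is in the $x$-direction. Substituting $x_i = x_{i+1}$, $y_i = x_{i+1} y_{i+1}$, $\ldots$, $z_i = x_{i+1} z_{i+1}$ into the expansion of $g_i$ in $\hat R_i$, homogeneity of $F_i$ yields
\[
g_{i+1} \;=\; F_i(1,y_{i+1},\ldots,z_{i+1}) \;+\; x_{i+1}\, h_{i+1}
\]
for some $h_{i+1} \in \hat R_{i+1}$. The two summands lie in disjoint $x_{i+1}$-adic strata, so their initial forms cannot cancel. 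If $a_i$ is the largest $X_i$-exponent occurring in $F_i$, the first summand has $R_{i+1}$-order exactly $r_i - a_i$ and the second has order at least $1$. Three facts follow: (i) $r_{i+1} \le r_i$; (ii) if $x \in \mathrm{var}(F_i)$ then $a_i \ge 1$ and $r_{i+1} < r_i$; and (iii) if $r_{i+1} = r_i$, then $a_i = 0$ and the $x_{i+1}$-free part of $F_{i+1}$ is exactly $F_i(y_{i+1},\ldots,z_{i+1})$, so $\mathrm{var}(F_{i+1}) \supseteq \mathrm{var}(F_i)$.

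Assuming $r_n = r_0$, (i) forces $r_i = r_0$ throughout, hence by (ii) the direction $d_{i+1} \notin \mathrm{var}(F_i)$ at every step, and iterating (iii) gives $\mathrm{var}(F_i) \supseteq \mathrm{var}(F_0)$. Therefore $d_{i+1} \notin \mathrm{var}(F_0)$ for every $i$; since $F_0 \ne 0$, the nonempty set $\mathrm{var}(F_0)$ contains a direction never used in the sequence, contradicting (1). The delicate point is observation (iii): that the variable support of $F_i$ is preserved (and can only be augmented by the transform direction) when passing to $F_{i+1}$. This relies on the $x_{i+1}$-adic disjointness of the two summands in the displayed identity, which prevents higher-order corrections $h_{i+1}$ from cancelling monomials already present in the old initial form.
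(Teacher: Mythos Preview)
Your proof is correct and takes a route that differs from the paper's in the forward implication. For $(1)\Rightarrow(2)$, the paper invokes a decomposition result of H\"ubl--Swanson to write $f=\sum_j \lambda_j g_j$ as a finite sum of monomials with unit coefficients, fixes one monomial $g_j$ of minimal order, chooses a variable (say $x$) with positive exponent in $g_j$, and then locates the first index $s$ at which the sequence moves in the $x$-direction; a direct computation with the monomial expansion shows the order drops at step $s+1$. You instead run the contrapositive and track the full initial form $F_i\in\gr(R_i)$ together with its variable support, the crucial inductive fact being your claim~(iii) that $\mathrm{var}(F_i)$ can only grow while the order is constant. This replaces the single-monomial bookkeeping and buys you independence from the external decomposition lemma; conversely, the paper's argument is more hands-on and makes the actual index at which the order drops explicit. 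One technical point: your displayed identity $g_{i+1}=F_i(1,y_{i+1},\ldots,z_{i+1})+x_{i+1}h_{i+1}$ and the ``disjoint $x_{i+1}$-adic strata'' reasoning tacitly evaluate a polynomial with coefficients in $k$ at elements of $R_{i+1}$, which presumes a coefficient field. To make the argument work verbatim without that hypothesis, either replace $F_i$ by a lift $\tilde F_i$ with unit coefficients in $R_i$, or---more cleanly---pass to the quotient $R_{i+1}/x_{i+1}R_{i+1}$ and use the identification $\gr(R_{i+1})/(X_{i+1})\cong\gr(R_{i+1}/x_{i+1}R_{i+1})$ to see that $F_{i+1}|_{X_{i+1}=0}$ is exactly the degree-$r_i$ initial form of $\bar g_{i+1}$, which is $F_i(Y_{i+1},\ldots,Z_{i+1})$ when $a_i=0$. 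With that adjustment your claims (i)--(iii) hold in full generality and the proof goes through.
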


\begin{proof}
Assume that item ~1 fails. Then at least one of the sets 
$\mathfrak D_{x}, ~\mathfrak D_{y},~\ldots,~ \mathfrak D_{z}$ is empty. By relabeling, we may assume
that  $\mathfrak D_x$ is empty. Then $R_n \subset R_{xR}$ and  we have
$\ord_{R_n}(xR)^{R_n}   = 1 =   \ord_R(x)$.

Assume that item~1 holds and fix a nonzero element   $f \in \m$.  
Let $r:=\ord_{R}(f) > 0$. 
The order of the transform of $fR$ never increases, cf \cite[Lemma~3.6]{HKT}. 
By \cite[Proposition~1.3]{HubS}, there exist monomials $g_1, \ldots, g_t$  
and units  $\lambda_1, \ldots, \lambda_t$ in $R$  such that 
$$
f~=~ \lambda_1 g_1+ \cdots +~\lambda_t g_t.
$$
Let $g_i=x^{a_{i}}y^{b_{i}} \cdots z^{c_{i}}$ be a 
monomial with $\ord_{R}(g_i) = r$. By relabeling the variables, we may assume $a_{i} > 0$. 
Since the set $\mathfrak D_{x}$ is nonempty, there 
exists $s  \in \{0, \ldots, n - 1\}$ such that the transform from $R_{s}$ to $R_{s+1}$ 
is in the $x$-direction. Let $s$ be minimal with this property. 

If $\ord_{R_s} (f R)^{R_s} < r$, we are done.
Otherwise, we have  $\ord_{R_s} (f R)^{R_s} = r$, and there exists a 
monomial $q$ in $x_s, y_s, \ldots, z_s$ such that
$$
(f R)^{R_s} = (\frac{f}{q}) R_s = (\frac{g_1}{q} \lambda_1 + \ldots + \frac{g_t}{q} \lambda_t) R_s.
$$
The elements $\frac{g_1}{q}, \ldots, \frac{g_t}{q}$ are  distinct monomials in $x_s, y_s, \ldots, z_s$.
Since $\ord_{R_s}$ is a monomial valuation and $\ord_{R_s} (f R)^{R_s} = r$, 
we have $\ord_{R_s} (g_i R)^{R_s} = r$. Thus 
$$
(g_i R)^{R_s} ~ = ~ (\frac{g_i}{q}) R_s ~ = ~ (x_s^{a_{i}} y_s^{b_i} \cdots z_s^{c_i}) R_s.
$$
Since the transform from $R_{s}$ to $R_{s+1}$ is in the $x$-direction, 
$\ord_{R_{s+1}} (g_i R)^{R_{s+1}} = r - a_i < r$.
Set $q' = q x_s^{r}$. Then 
$$
(f R)^{R_{s+1}} ~=~  (\frac{f}{q'}) R_{s+1} ~ = ~ (\frac{g_1}{q'} \lambda_1 + \ldots + \frac{g_t}{q'} \lambda_t) R_{s+1}.
$$
We have  $\frac{g_i}{q'} R_{s+1} = (g_i R)^{R_{s+1}}$.  Since $\ord_{R_{s+1}}$ 
is a monomial valuation and the monomials $\frac{g_1}{q'}, \ldots, \frac{g_t}{q'}$ are distinct, 
we have 
$\ord_{R_{s+1}} (f R)^{s+1} \le r - a_i < r$.
This completes the proof of Theorem~\ref{3.3a}.
\end{proof}

\begin{remark}  \label{3.15}
For the transform of an ideal as defined in Definition~\ref{1.1}, we
observe some properties of the transform of a monomial ideal 
with respect to a monomial local quadratic transform from $R_0$ to $R_1$.

Let $(R_1, \m_1)$ be the monomial local quadratic transformation of $R_0$ in the $w$-direction,
where $w \in \Delta(\m_0)$, and $\m_1:=(x_1, y_1, \ldots, w_1, \ldots, z_1)$, where
$$
x_1~:= ~\frac{x}{w},\quad y_1~:=~\frac{y}{w},~ \ldots~,~ w_1~:=~ w, ~\ldots ~ , ~z_1~:= ~\frac{z}{w}.
$$
Let $p:=x^{\alpha_1}y^{\beta_1}\cdots w^{\eta_1}\cdots z^{\gamma_1}$ and 
$q:=x^{\alpha_2}y^{\beta_2}\cdots w^{\eta_2}\cdots z^{\gamma_2}$ be
distinct  monomials 
in $R$ and let $I := (p,q)R$.  Assume that $r:= \ord_R(p) \le \ord_R(q) := s$. 
Thus   $r=\ord_{R}(I)$. 
In $R_1$, the monomials $p$ and $q$ can be written as follows:
$$
\aligned 
p:&=x^{\alpha_1}y^{\beta_1}\cdots w^{\eta_1}\cdots z^{\gamma_1}\\
  &=(w x_1)^{\alpha_1}(w y_1)^{\beta_1} \cdots w^{\eta_1} \cdots (w z_1)^{\gamma_1}\\
  &=w_1^{r}(x_1^{\alpha_1}y_1^{\beta_1}~\cdots w_1^{0}\cdots~ z_1^{\gamma_1}) ~=~w_1^rp_1 ~=~w^rp_1
\endaligned
$$
and 
$$
\aligned 
q:&=x^{\alpha_2}y^{\beta_2}\cdots w^{\eta_2}\cdots z^{\gamma_2}\\
  &=(w x_1)^{\alpha_2}(w y_1)^{\beta_2} \cdots w^{\eta_2} \cdots (w z_1)^{\gamma_2}\\
  &=w_1^{r}(x_1^{\alpha_2}y_1^{\beta_2}~\cdots w_1^{s-r}\cdots~ z_1^{\gamma_2}) ~=~w_1^rq_1 ~=~w^rq_1.
\endaligned
$$
Thus $I_1 := I^{R_1} = (p_1, q_1)R_1$ is the transform of $I$ in $R_1$. We have  
\begin{equation} \label{3.151}
\ord_{R_1}(I_1) \le \ord_{R_1}(p_1)
 \qquad 
\text{ and } \quad \ord_{R_1}(p_1) = r - \eta_1 \le \ord_R(p).
\end{equation}
Hence 
if $\eta_1 > 0$, then $\ord_{R_1}(I_1) < \ord_R(I)$.

It is observed in \cite [Lemma~3.6]{HKT} 
that $\ord_{R_1}(I^{R_1})~ \leq~ \ord_{R_0}(I)$ even without the assumption that $I$ is
a monomial ideal.
\end{remark}

\begin{remark}\label{3.41}
Let notation be as in Setting~\ref{3.2a}, and let $V$ be a valuation domain that dominates $R_n$. 
The following are equivalent:
\begin{enumerate} 
\item The transform from $R_0$ to $R_1$ is in the $x$-direction.
\item $v(x) < v(w)$ for each $w \in \Delta(\m)$ with $w \ne x$. 
 \item $\m V = xV$.
\end{enumerate}
Assume that 
the transform from $R_0$ to $R_1$ is in the $x$-direction.  By 
relabeling the variables, we may assume that 
$$ 
v(y)~ = ~\min\big \{v(w) ~|~ w \in \Delta(\m) \quad \text{ with }~~ w \ne x \big\}.
$$
With this assumption, the   following are equivalent:
\begin{enumerate} 
\item[(a)] The set $\frak D_y$ is nonempty.
\item[(b)] The set $\frak D_y \cup \cdots \cup \frak D_z$ is nonempty.
\item[(c)] There exists a positive integer $s$ in $\{ 1, \ldots, n - 1 \}$ such that the transform from $R_0$ to $R_s$ 
is in the $x$-direction and the transform from $R_s$ to $R_{s+1}$ is in the $y$-direction.
\item[(d)]  We have $sv(x) < v(y) < (s+1)v(x)$ for some  $s$ in $\{ 1, \ldots, n - 1 \}$.
\end{enumerate}
\end{remark}

\begin{remark}\label{3.43}
Let notation be as in Remark~\ref{3.41}.
Assume that the sets $\frak D_{x}, ~\frak D_{y},~\ldots,~ \frak D_{z}$ are all nonempty. 
Label the variables $x, y, \ldots, z$ in $\Delta(\m)$ as $w_1, w_2, \ldots w_d$,
where $i < j$ if the sequence $\{R_i\}_{i=0}^n$ contains a transform in the $w_i$-direction
before it contains a transform in the $w_j$-direction. Thus with the 
assumptions of Remark~\ref{3.41}, we have $w_1 = x$,  $w_2 = y$,  and 
$v(w_i) < v(w_j)$ if and only if $i < j$. 
\end{remark}

\begin{definition} Let $V$ be a valuation domain, and let $b_1, \ldots, b_s$ 
be nonzero elements of $V$. We say that $b_1, \ldots, b_s$ are {\bf comparable in} $V$
if $\sqrt{b_1V} = \cdots = \sqrt{b_sV}$, that is, the ideals $b_iV$ and $b_jV$
have the same radical for all $i$ and $j$ between $1$ and $s$.
\end{definition}

\begin{proposition}\label{3.44}
Let notation be as in Remark~\ref{3.43}.
Thus the sets $\frak D_{x}, ~\frak D_{y},~\ldots,~ \frak D_{z}$ are all nonempty,
and our  fixed  regular system of parameters $(x, y, \ldots, z)$ is also denoted 
$(w_1, w_2, \ldots, w_d)$.  
Let $a_i := v(w_i)$ for $1 \le i \le d$. Thus 
the tuple $(a_1, a_2, \ldots, a_d)$ is the family of 
$v$-values of $(w_1, w_2, \ldots, w_d)$. 
Then we have:
\enumerate
\item $0~<~a_1~<~a_2~<~ \cdots~<~a_d$.
\item There exists a positive integer $s$ such that $sa_1~< ~a_2~<~(s+1)a_1$.
\item We have 
 $$(j - 2) a_j~<~a_1+a_2+\cdots+a_{j-1}\quad \text{ for $j$ with $~~3~ \leq~ j~ \leq ~ d$}.
$$
\item The elements $w_1, \ldots, w_d$ are comparable in $V$.
\endenumerate
\end{proposition}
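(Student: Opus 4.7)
The plan is to track the $v$-values of the regular parameters of each ring $R_i$ in the sequence. For each step $i$ and $k\in\{1,\ldots,d\}$, let $\omega_{i,k}$ denote the element of $\Delta(\m_i)$ corresponding under Setting~\ref{3.2a} and the labeling of Remark~\ref{3.43} to $w_k$, and set $\alpha_{i,k}:=v(\omega_{i,k})$, so $\alpha_{0,k}=a_k$. Let $m_i\in\{1,\ldots,d\}$ denote the direction of the transform $R_i\to R_{i+1}$, and for each $j\in\{1,\ldots,d\}$ let $n_j$ be the number of transforms that occur before the first $w_j$-direction transform. By the labeling of Remark~\ref{3.43}, $0=n_1<n_2<\cdots<n_d$ and $m_i\in\{1,\ldots,j-1\}$ for every $i<n_j$.

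The central computation is an explicit formula for $\alpha_{n_j,k}$. Set $c_m:=\sum_{i<n_j,\,m_i=m}\alpha_{i,m}$ for $m=1,\ldots,j-1$. The transition rules of Setting~\ref{3.2a} preserve $\alpha_{i,k}$ when $m_i=k$ and decrement it by $\alpha_{i,m_i}$ when $m_i\ne k$, so a straightforward induction gives
\[
\alpha_{n_j,k}\,=\,a_k-\!\!\sum_{m\le j-1,\,m\ne k}\!\!c_m \ \ (k<j),\qquad \alpha_{n_j,k}\,=\,a_k-\sum_{m\le j-1}c_m\ \ (k\ge j).
\]
Because $m_{n_j}=j$, Remark~\ref{3.41} gives $\alpha_{n_j,j}<\alpha_{n_j,k}$ for every $k\ne j$; combined with $\alpha_{n_j,k}>0$ for each $k$ (as $\omega_{n_j,k}\in\m_{n_j}\subset\m_V$), substitution produces three families of scalar inequalities: (A)~$a_j<a_k+c_k$ for $k=1,\ldots,j-1$; (B)~$a_j<a_k$ for $k>j$; and (C)~$c_1+\cdots+c_{j-1}<a_j$.

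From these the first three claims follow. For (1), applying (B) with $k=j+1$ for each $j=1,\ldots,d-1$ yields the chain $a_1<a_2<\cdots<a_d$, and $a_1=v(w_1)>0$ since $w_1\in\m$. For (2), specialize to $j=2$: the first $n_2$ transforms are all in the $w_1$-direction, so $c_1=n_2a_1$, and (A), (C) give $n_2a_1<a_2<(n_2+1)a_1$, yielding the claim with $s=n_2\ge 1$. For (3), sum (A) over $k=1,\ldots,j-1$ to obtain $(j-1)a_j<(a_1+\cdots+a_{j-1})+(c_1+\cdots+c_{j-1})$, then use (C) to conclude $(j-2)a_j<a_1+\cdots+a_{j-1}$.

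For (4), it suffices to produce, for each $i$, a positive integer $N_i$ with $N_ia_1\ge a_i$: since $a_i\ge a_1$ already gives $w_i\in w_1V\subset\sqrt{w_1V}$, while $N_ia_1\ge a_i$ gives $w_1^{N_i}\in w_iV\subset\sqrt{w_iV}$, one obtains $\sqrt{w_iV}=\sqrt{w_1V}$ for every $i$. I induct: take $N_1=1$, $N_2=n_2+1$ from (2), and for $i\ge 3$ combine (3) with the inductive bounds $a_k\le N_ka_1$ for $k<i$ to obtain $(i-2)a_i<(N_1+\cdots+N_{i-1})a_1$; using $i-2\ge 1$, one may then take $N_i:=N_1+\cdots+N_{i-1}$. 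The main work is really just in setting up and justifying the $\alpha_{n_j,k}$ formulas in the second paragraph; once those are in hand, the algebra for (1)--(4) is short, and no deeper obstacle is expected.
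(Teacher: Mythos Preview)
Your proof is correct. For item~3 it takes a genuinely different route from the paper's. The paper argues by contrapositive: assuming $(\ell-2)a_\ell \ge a_1+\cdots+a_{\ell-1}$ for some $\ell$, it maintains the inductive invariant $(\ell-2)a_{k,\ell}\ge a_{k,1}+\cdots+a_{k,\ell-1}$ at every step $k$, which forces $a_{k,\ell}$ never to be the minimum and hence contradicts $\frak D_{w_\ell}\ne\emptyset$. You instead jump directly to the single step $n_j$ where direction $j$ first occurs, write down explicit closed formulas for the $\alpha_{n_j,k}$, and read off the scalar inequalities (A), (B), (C) from the two facts $\alpha_{n_j,j}=\min_k\alpha_{n_j,k}$ and $\alpha_{n_j,j}>0$; summing (A) over $k<j$ and applying (C) gives item~3 in one line. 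Your approach has the bonus of yielding items~1 and~2 from the same computation (the paper defers these to Remarks~\ref{3.41} and~\ref{3.43}), while the paper's invariant argument makes transparent the converse mechanism, namely that failure of the inequality in item~3 is exactly what would prevent direction $\ell$ from ever being chosen. For item~4 both arguments use items~1--3 to bound each $a_i$ by a multiple of $a_1$; you spell out the recursion $N_i=N_1+\cdots+N_{i-1}$ explicitly, whereas the paper leaves this implicit.
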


\begin{proof}
Item 1 follows from Remark~\ref{3.43}, and item~2 follows from Remark~\ref{3.41}. 

In order to give a detailed proof of item~3, we introduce more notation. 
For each  $i \in \{0, 1, \ldots, n\}$ and each positive integer $j$ with $1 \le j \le d$, 
we define $w_{i_j}$ as follows:
$$
\aligned 
\m = &\m_0 ~ = ~ (x, y, \ldots, z)R_0 ~=~  (w_{0_1}, w_{0_2}, \ldots, w_{0_d})R_0\\
& \m_i ~=~ (x_i, y_i, \ldots, z_i)R_i ~= ~ (w_{i_1}, w_{i_2}, \ldots, w_{i_d})R_i. \\
\endaligned
$$
To prove item~3, it suffices to show that if  $(\ell -2)a_{\ell}~\ge ~a_1+a_2+\cdots+a_{\ell-1}$ for some $\ell$ with $~~3~ \leq~ \ell~ \leq ~ d$,
then  the set $\frak D_{w_{\ell}}$ is empty.
Let 
$$
\aligned
& \big(a_{0_1}, a_{0_2}, \ldots, a_{0_d}\big)=\big(v(w_{0_1}), v(w_{0_2}\big), \ldots, v(w_{0_d})),\\
&\big(a_{k_1}, a_{k_2}, \ldots, a_{k_d}\big)=\big(v(w_{k_1}), v(w_{k_2}), \ldots, v(w_{k_d})\big)\quad \text{ for each $k \geq 1$}.
\endaligned
$$
With this notation, we  show $(\ell-2) a_{k_\ell}~  ~\ge ~a_{k_1} + \cdots + a_{k_{\ell-1}}$ for all $k \ge 0$ by induction on $k$. By hypothesis   $(\ell - 2) a_{0_\ell}  ~\ge ~a_{0_1} + \cdots + a_{0_{\ell-1}}$.
Assume  the assertion holds for $k$ and let $e_k := \min\{a_{k_1}, a_{k_2}, \ldots, a_{k_{\ell-1}} \}$.
Then we have
$$
\aligned 
(\ell - 2) a_{k+1_\ell} ~&~ =(\ell-2) (a_{k_\ell} - e_k)~=~ (\ell - 2) a_{k_\ell} - (\ell - 2) e_k  \\
              & ~\ge ~a_{k_1} + \cdots + a_{k_{\ell-1}} - (\ell - 2)e_k \\
              & ~= ~ a_{k+1_1} + \cdots + a_{k+1_{\ell-1}}. 
\endaligned
$$
Therefore the assumption that $\frak D_{w_{\ell}}$ is nonempty
implies that 
$$(\ell - 2) a_{\ell}~< ~a_1+a_2+\cdots+a_{\ell-1}.$$
This proves item~3.  Item~4 now follows from items~1, 2 and 3. \end{proof}

\section{Infinite Directed Unions of Monomial Local Quadratic Transforms}  \label{c4}

 \begin{remark}\label{0.2}
 Let $(R, \m)$ be a $d$-dimensional regular local ring  with $d \ge 2$,  and let $V$ be a
 valuation domain birationally dominating $R$ and zero-dimensional 
over $R$.   Let 
$v$ be a valuation associated to $V$. Then there exists a
uniquely defined  infinite sequence  
\begin{equation} \label{0.3}
\{(R_n, \m_n)\}_{n\geq 0}~\equiv~R=:R_0 ~\subset~ R_1 ~\subset~ R_2~ \subset \cdots \subset~ R_n~ \subset~ \cdots \subset  V
\end{equation}
of local quadratic transforms of $R$ along $V$.

\begin{enumerate}
\item If the $v$-values $v(x), v(y), \ldots, v(z)$ of a
regular system of parameters for $\m$ 
are rationally independent real numbers, then the sequence given 
in Equation~\ref{0.3} is monomial with respect to this regular system of parameters.   However,  as Shannon shows in \cite[Example~4.17]{S},  
in this situation it is often
the case that $S := \bigcup_{n \ge 0}R_n$ is properly contained in $V$. 
\item\label{0.2.2}
If $v$ is   rank one  discrete,  then the 
  sequence given in Equation~\ref{0.3}  switches strongly infinitely often,  that is  $V=\bigcup_{n\geq 0}R_n$.
To see this, let $\frac{p}{q} \in V$, with $p, q \in R$.
Assume that from $R$ to $R_1$, we divide by $x$, where $v (x) > 0$.
Let $p_1 = p / x$ and $q_1 = q / x$, so that $\frac{p}{q} = \frac{p_1}{q_1}$, where $v (q_1) < v (q)$.
A simple induction argument shows that we can write $\frac{p}{q} = \frac{p_n}{q_n}$, 
where $p_n, q_n \in R_n$ and $v (q_n) = 0$, for some $n$.
See for example \cite[Proposition~23.3]{HRW}.
Thus the sequence $\{(R_n, \m_n)\}_{n \ge 0}$ switches strongly infinitely often.

\item
If  $V$ is rank one and  the rational rank of $V$ is less than $d$, it is possible that the
field extension $R/\m \subset V/\m_v$ is an infinite algebraic field 
extension.   This   is illustrated  in Example 4 of page 104 in \cite{ZS2}.

\item\label{0.2.4}
Assume that  $V$ has  rank one and rational rank $r$ and that  the  local quadratic  sequence  
 $\{(R_n, \m_n)\}_{n \ge 0}$  along $V$ switches strongly infinitely often.  
Then for each sufficiently large integer $n$, there exists a sequence of elements $w_1, \ldots, w_r $  
that are part of a regular system of parameters for $\m_n$  and are 
such that  $v (w_1), \ldots, v (w_r)$ are rationally independent.

\begin{proof}
Since $V$ has rational rank $r$, there exist  elements $a_1, \ldots, a_r $   in  $R$ 
such that $v (a_1), \ldots, v (a_r)$ are rationally independent.  
Since $V = \bigcup_{n \ge 0}R_n$,  it follows from \cite[Prop. 4.18]{S} that for sufficiently large integers $n$, the product
$a_1 \cdots a_r$ is a monomial with respect to a regular system of parameters $w_1, \ldots, w_d$ of $R_n$.
It follows that $v(a_1), \ldots, v(a_r)$ are in 
the $\Q$-linear subspace  $L$  of $\mathbb{R}$ spanned by $v (w_1), \ldots, v (w_d)$.  Since   $v (a_1), \ldots, v (a_r)$ are 
linearly independent over $\Q$,    
the vector space  $L$ has rank at least $r$.  By re-labeling the   $w_i$,   
 it follows that   $v(w_1), \ldots,  v(w_r)$   are rationally independent.
\end{proof}
\end{enumerate}
\end{remark}

We observe in Proposition~\ref{maxratrank} that an infinite sequence $\{(R_n, \m_n)\}_{n \ge 0}$ of local
quadratic transforms that switches strongly infinitely often is eventually monomial
if  
$V = \bigcup_{n \ge 0}R_n$ has maximal rational rank .

\begin{proposition} \label{maxratrank}
Let $(R, \m)$, be a $d$-dimensional regular local ring and let $V$ be a rank $1$, rational rank $d$ valuation domain birationally dominating $R$.
Consider the infinite sequence
	$$R =: R_0 \subset R_1 \subset \cdots \subset R_n \subset \cdots \subset V$$
of local quadratic transforms of $R$ along $V$.   If $V = \bigcup_{n=0}^{\infty} R_n$,  then 
there exists an integer $n  \ge  0$ and a regular system of parameters $x, y, \ldots, z$ of $R_n$ such that the sequence  $\{R_i\}_{i \ge n}$ is monomial 
in these regular parameters.
\end{proposition}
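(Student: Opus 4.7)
The plan is to leverage Remark~\ref{0.2}(\ref{0.2.4}) to obtain a regular system of parameters of some $R_N$ whose $v$-values are rationally independent, and then to show by induction that every subsequent local quadratic transform along $V$ is automatically monomial with respect to the evolving system of parameters.

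Since $V$ is rank one and $V = \bigcup_{n \ge 0} R_n$, Shannon's result cited in the introduction (\cite[Prop.~4.18]{S}, with the rank one discrete addendum of \cite{GR}) implies that $\{(R_n, \m_n)\}_{n \ge 0}$ switches strongly infinitely often. Applying Remark~\ref{0.2}(\ref{0.2.4}) with $r = d$, there is an integer $N \ge 0$ and a regular system of parameters $w_1, \ldots, w_d$ of $\m_N$ such that $v(w_1), \ldots, v(w_d)$ are rationally independent.

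I then prove by induction on $k \ge N$ the statement $P(k)$: $R_k$ admits a regular system of parameters $w_1^{(k)}, \ldots, w_d^{(k)}$ obtained from $(w_1, \ldots, w_d)$ by iterating the monomial rule of Setting~\ref{3.2a}, and $v(w_1^{(k)}), \ldots, v(w_d^{(k)})$ are positive and rationally independent. The base case $k = N$ sets $w_j^{(N)} := w_j$. For the inductive step, rational independence forces the values $v(w_j^{(k)})$ to be pairwise distinct, so there is a unique index $i$ with $v(w_i^{(k)}) = \min_j v(w_j^{(k)})$. Hence $\m_k V = w_i^{(k)} V$, and $R_{k+1}$ is the localization of $B := R_k[\m_k / w_i^{(k)}]$ at $\m_V \cap B$. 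The $d$ elements $w_i^{(k)}$ together with $w_j^{(k)}/w_i^{(k)}$ for $j \ne i$ all lie in $\m_V \cap B$; let $I$ be the ideal they generate in $B$. Since $B/w_i^{(k)} B$ is the polynomial ring in the $w_j^{(k)}/w_i^{(k)}$ over $R_k/\m_k$ (cf.\ Definition~\ref{2.1}), we obtain $B/I \cong R_k/\m_k$, a field, so $I$ is a maximal ideal of $B$. Because $\m_V \cap B$ is a proper prime containing $I$, we conclude $\m_V \cap B = I$. Thus $R_{k+1}$ is the monomial local quadratic transform of $R_k$ in the $w_i^{(k)}$-direction, with regular system of parameters given by $w_i^{(k+1)} := w_i^{(k)}$ and $w_j^{(k+1)} := w_j^{(k)}/w_i^{(k)}$ for $j \ne i$. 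These values are positive by the uniqueness of the minimum, and any $\mathbb{Q}$-linear relation among them transports to a $\mathbb{Q}$-linear relation among $v(w_1^{(k)}), \ldots, v(w_d^{(k)})$, which must be trivial. This closes the induction and yields $P(k)$ for all $k \ge N$, proving the proposition with $n = N$ and $(x, y, \ldots, z) := (w_1, \ldots, w_d)$.

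The main technical point is verifying that no residue-field extension occurs when passing from $R_k$ to $R_{k+1}$, i.e., that the ideal $I$ generated in $B$ by the natural $d$ monomial elements is already the full contraction $\m_V \cap B$. This is settled by the direct computation $B/I \cong R_k/\m_k$, which makes $I$ maximal and leaves no room for $\m_V \cap B$ to be larger.
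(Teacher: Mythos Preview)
Your proof is correct and follows the same approach as the paper: invoke Remark~\ref{0.2}(\ref{0.2.4}) to produce a regular system of parameters with rationally independent $v$-values, and conclude that the subsequent sequence is monomial. The paper simply asserts the last step by appealing to Remark~\ref{0.2}(1), whereas you supply a careful inductive justification (including the check that the natural monomial maximal ideal of $R_k[\m_k/w_i^{(k)}]$ coincides with the center of $V$), which is a welcome elaboration rather than a different strategy.
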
 

\begin{proof}
By Remark~\ref{0.2}.\ref{0.2.4}, there exists an integer $n \ge 0$ and a regular system of parameters $x, y, \ldots, z$ of $R_n$ such that $v (x), v (y), \ldots, v (z)$ are rationally independent.
It follows that the sequence $\{R_i\}_{i \ge n}$ is monomial  with respect to $x,y, \ldots, z$.
\end{proof}

In Setting~\ref{3.2}, we extend the notation of Setting~\ref{3.2a} to an infinite sequence.

\begin{setting}\label{3.2} Let $(R,\m)$ and $x, y, \ldots, z$ be as in Setting~\ref{3.2a}. 
  We consider an infinite  sequence of regular local rings 
\begin{equation}
\{(R_i, \m_i)\}_{i\geq 0}~\equiv~R=:R_0 ~\subset~ R_1 ~\subset~ R_2~ \subset \cdots \subset~ R_i~ \subset~ \cdots ,
\end{equation}
where $R_{i+1}$ is a monomial local quadratic transformation of $R_i$ for each $i \ge 0$. 
As in Setting~\ref{3.2a}, let $\frak D_{x}$(respectively,$~ \frak D_{y},~\ldots, ~ \frak D_{z}$) 
 denote the set of nonnegative integers $i$ for which the transform from $R_i$ to $R_{i+1}$ 
is in the $x$-direction (respectively, the $y$-direction, ~\ldots, ~ the $z$-direction).

Set $S:=\bigcup_{i\geq 0} R_i$. It is straightforward to prove that 
 $S$ is a normal local domain with field of fractions $\mathcal Q(R)$ and 
maximal ideal $\m_S:=\bigcup_{i\geq 0}\m_i$. Our assumption that the local 
quadratic transforms are monomial implies that there is
no residue field extension, that is, $\bigcup_{i\geq 0}(R_i/\m_i) = R/\m$.
Moreover, if $S$ is not a valuation domain, it is known that 
there exist infinitely many valuation domains $V$ that birationally
dominate $S$ and have positive 
dimension over $S$, that is, the residue field of $V$ has positive transcendence
degree over the field $S/\m_S$; see   \cite[Lemma~7]{A} or  \cite[Exercise~6.24]{SH}. 
\end{setting}

\begin{theorem}\label{3.3}
Let notation be as in Setting~\ref{3.2}, and let $V$ be a valuation domain that 
birationally dominates $S$. The following are equivalent:  
\begin{enumerate} 
\item  The sets  $\frak D_{x}, ~\frak D_{y},~\ldots,~ \frak D_{z}$ are all infinite.
\item  $\{(R_n, \m_n)\}_{n \ge 0}$ switches strongly infinitely often.  
\item $S = V$ and $V$ has rank 1.  
\end{enumerate}
\end{theorem}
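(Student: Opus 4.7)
\begin{plan}
I would prove the equivalence as a cycle $(1) \Rightarrow (2) \Rightarrow (3) \Rightarrow (1)$, where Theorem~\ref{3.3a}, Remark~\ref{switch}, Remark~\ref{rank}, and Remark~\ref{eqloc} are the main tools.

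For $(1) \Rightarrow (2)$, I would use the criterion in Remark~\ref{switch}: it suffices to show that for each $j \ge 0$ and each nonzero $f \in R_j$, there is some $n \ge j$ with $(fR_j)^{R_n} = R_n$. Given infinite direction sets, I can partition the indices $\ge j$ into consecutive blocks $[c_0,c_1),[c_1,c_2),\ldots$ with $c_0=j$, choosing each $c_{k+1}$ so that every direction occurs at least once between $R_{c_k}$ and $R_{c_{k+1}}$ (possible since each $\mathfrak D_x,\ldots,\mathfrak D_z$ is infinite). Applying Theorem~\ref{3.3a} to the finite sub-sequence $R_{c_k}\subset\cdots\subset R_{c_{k+1}}$ and to the transform $f_k$ of $f$ in $R_{c_k}$, together with the transitivity of transforms, yields a strictly decreasing sequence of nonnegative integers $\ord_{R_{c_k}}(f_k)$. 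Hence after finitely many blocks $f_k$ becomes a unit, i.e.\ $(fR_j)^{R_{c_k}}=R_{c_k}$.

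For $(2) \Rightarrow (3)$, Remark~\ref{rank} tells us that $S=\bigcup_{n\ge 0}R_n$ is already a rank one valuation domain. Since $V$ is assumed to birationally dominate $S$ and two valuation domains of the same fraction field in which one birationally dominates the other must coincide (any proper valuation overring has a strictly smaller maximal ideal center), we conclude $V=S$.

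For $(3) \Rightarrow (1)$, I argue the contrapositive. Suppose some direction set, say $\mathfrak D_x$, is finite, and pick $n_0$ larger than every element of $\mathfrak D_x$. Then for all $n\ge n_0$ the element $x_n$ remains part of a regular system of parameters of $R_n$, since transforming in any direction other than $x$ merely divides $x_n$ by another variable. I would then show iteratively that $(R_n)_{(x_n)}=(R_{n_0})_{(x_{n_0})}$ for all $n \ge n_0$: passing from $R_n$ to $R_{n+1}$ (say in the $y$-direction), the prime $\q=(x_{n+1})R_{n+1}$ contracts to a prime of $R_n$ containing $(x_n)$ but not containing $y_n=y_{n+1}$, so $\q\cap R_n\ne \m_n$, and Remark~\ref{eqloc} gives $(R_n)_{(x_n)}=(R_{n+1})_{(x_{n+1})}$. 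Consequently $S\subseteq (R_{n_0})_{(x_{n_0})}$, a DVR. Since the hypothesis of (3) says $V=S$ is rank one, any valuation overring of $V$ in the fraction field is $V$ itself or the whole fraction field; but this DVR is neither (its center on $R_{n_0}$ is $(x_{n_0})\ne \m_{n_0}$, so it cannot equal $V$), giving the desired contradiction.

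The main obstacle is the iterated application of Remark~\ref{eqloc} in $(3)\Rightarrow(1)$: it requires verifying that the height-one prime $(x_{n+1})R_{n+1}$ contracts strictly below $\m_n$ in $R_n$, which is the delicate bookkeeping step. Everything else then reduces to the combinatorial argument with blocks of directions in Theorem~\ref{3.3a} and the rigidity of rank one valuation domains under birational domination.
\end{plan}
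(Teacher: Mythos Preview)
Your proposal is correct and the $(1)\Rightarrow(2)$ and $(2)\Rightarrow(3)$ steps match the paper's essentially verbatim (the paper is simply terser, writing ``immediate from Theorem~\ref{3.3a}'' and ``clear by Remark~\ref{rank}'').

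The genuine difference is in closing the cycle. The paper does \emph{not} argue $(3)\Rightarrow(1)$ directly; instead it proves $(2)\Rightarrow(1)$ by the same contrapositive you describe (if $\mathfrak D_x$ is finite then $S\subseteq (R_j)_{(x_j)}$, so the sequence fails to switch strongly infinitely often), and then obtains $(3)\Rightarrow(2)$ by citing Granja \cite[Theorem~13]{Gr} as a black box. Your $(3)\Rightarrow(1)$ reuses that same containment $S\subseteq (R_{n_0})_{(x_{n_0})}$ but then finishes with the elementary observation that a rank-one valuation domain admits no proper overring in its fraction field other than the field itself, so the DVR $(R_{n_0})_{(x_{n_0})}$ would have to coincide with $V$, contradicting that its center on $R_{n_0}$ is $(x_{n_0})\ne\m_{n_0}$. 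This makes your argument self-contained, whereas the paper leans on an external result. One small point you leave implicit: to conclude $(x_{n+1})R_{n+1}\cap R_n=(x_n)R_n$ exactly (not just containment), you need that the contraction has height one, which follows immediately from Remark~\ref{eqloc} since $(R_{n+1})_{(x_{n+1})}$ is a DVR.
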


\begin{proof}
$(2) \Rightarrow (1)$ : Assume that item ~1 fails. Then at least one of the sets 
$\frak D_{x}, ~\frak D_{y},~\ldots,~ \frak D_{z}$ is finite. By relabeling, we may assume
that  $\frak D_x$ is finite. Then there exists a positive integer $j$
such that for $n \ge j$, the transform from $R_n $ to $R_{n+1}$ is not in the $x$-direction. 
Let $\m_j:=(x_j, y_j, \ldots, z_j)R_j$. Then  $\p_j:=(x_j)R_j$ has the property that 
$\bigcup_{n\geq 0}R_n \subseteq \big(R_j\big)_{\p_j}$. 
Thus $\{(R_n, \m_n)\}_{n \ge 0}$ does not switch strongly infinitely often.

$(1) \Rightarrow (2)$ :
Assume that item~1 holds.  To prove that 
$\{(R_n, \m_n)\}_{n \ge 0}$ switches strongly infinitely often 
it suffices to show for each integer $j \ge 0$ and nonzero element $f \in R_j$ 
that there exists an integer $n \ge j$ such that the  
transform $(fR_j)^{R_n} = R_n$, cf.  Remark~\ref{switch}.
Let $r:=\ord_{R_j}(f)$. To prove that the transform $(fR_j)^{R_n} = R_n$ for some $n$,
it suffices to prove  that the order of the transform 
$(fR_j)^{R_s}$  is less than $r$ for some integer $s > j$, and this is
immediate from Theorem~\ref{3.3a}. 
$(2) \Rightarrow (3)$ : This is clear by Remark \ref{rank}.
$(3) \Rightarrow (2)$ : This is shown by Granja \cite[Theorem~13]{Gr}.  
\end{proof}

\begin{lemma}\label{3.6}
Let notation be as in Setting~\ref{3.2}, and let  
$p$ and $q$ be distinct  monomials   in $R$.  
If the  sets $\frak D_{x}, ~\frak D_{y},~\ldots,~ \frak D_{z}$ are all infinite, then
there exists an  integer $t$ such that either  $p/q \in \m_t$ or $q/p \in \m_t$.
\end{lemma}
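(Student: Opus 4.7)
My plan is to derive Lemma~\ref{3.6} directly from Theorem~\ref{3.3} together with unique factorization in the regular local rings $R_n$. By hypothesis, every $\mathfrak D_w$ is infinite, so Theorem~\ref{3.3} applies and identifies $V := \bigcup_{n \ge 0} R_n$ as a rank one valuation domain. Since $V$ is a valuation domain and $p, q$ are nonzero elements of $R \subset V$, either $p/q \in V$ or $q/p \in V$; by symmetry I may assume $p/q \in V = \bigcup_{n \ge 0} R_n$, and hence $p/q \in R_t$ for some nonnegative integer $t$.

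The remaining task is to promote $p/q \in R_t$ to $p/q \in \m_t$. First I would verify, by a straightforward induction on $n$ using the substitution formulas for the monomial transforms in Setting~\ref{3.2}, that any monomial in $x, y, \ldots, z$ remains a monomial in the regular parameters $x_n, y_n, \ldots, z_n$ with non-negative integer exponents. Applying this to $p$ and $q$ and taking the quotient, $p/q$ equals a Laurent monomial
\[
p/q = x_t^{\delta_x}\, y_t^{\delta_y}\, \cdots\, z_t^{\delta_z}
\]
in $\mathcal Q(R_t)$, with integer (possibly negative) exponents $\delta_x, \delta_y, \ldots, \delta_z$.

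Finally, since $R_t$ is a regular local ring, it is a UFD, and its regular parameters $x_t, y_t, \ldots, z_t$ are pairwise nonassociate prime elements (each generates a distinct height one prime since the quotient by the others yields a localization of a polynomial ring). Hence such a Laurent monomial lies in $R_t$ iff every $\delta_i \ge 0$, and lies in $\m_t$ iff additionally some $\delta_i > 0$. The first condition is forced by $p/q \in R_t$, while the second follows from $p \ne q$ since unique factorization forbids $(\delta_x, \ldots, \delta_z) = 0$. Therefore $p/q \in \m_t$, as desired. I do not anticipate any real obstacle: the substantive content has been absorbed into Theorem~\ref{3.3}, and what remains is a routine UFD-divisibility bookkeeping, with the mildest care needed in invoking unique factorization to rule out the degenerate case $p/q = 1$.
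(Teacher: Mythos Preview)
Your proof is correct. Since Theorem~\ref{3.3} is proved in the paper before Lemma~\ref{3.6} and does not depend on it, there is no circularity in invoking it. Once $V=\bigcup_n R_n$ is known to be a valuation domain, the rest is, as you say, UFD bookkeeping: the monomial substitution rules make $p$ and $q$ into monomials in $x_t,\ldots,z_t$, the regular parameters are pairwise nonassociate primes in the UFD $R_t$, and $p\ne q$ forces the Laurent exponent vector of $p/q$ to be nonzero, so $p/q\in R_t$ implies $p/q\in\m_t$.

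This is a genuinely different route from the paper's own proof. The paper argues directly, by induction on $r=\ord_R(p,q)$: it tracks the transform of the ideal $(p,q)R$ through the sequence and uses the infinitude of each $\mathfrak D_w$ to show that the order of the transform eventually drops, reducing to the inductive hypothesis. That argument is self-contained and does not appeal to Theorem~\ref{3.3} or to the valuation-theoretic conclusion. Your approach is shorter and makes clear that, once Theorem~\ref{3.3} is in hand, Lemma~\ref{3.6} is essentially a formal consequence; the paper's approach, by contrast, gives an independent combinatorial proof that could in principle be read before Theorem~\ref{3.3} and that exhibits explicitly how the order drops along the sequence.
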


\begin{proof}
Let $I:=(p, q)R$.  We prove Lemma~\ref{3.6} by induction on  $r := \ord_R(I)$. 
The case where $r = 0$ is clear. Assume the assertion of  Lemma~\ref{3.6} holds
for all nonnegative integers less than $r$.

We may assume $\ord_R(p) \le \ord_R(q)$. 
Then $r = \ord_R(p)$. Assume notation as in Remark~\ref{3.15}. Thus the
transform from $R$ to $R_1$ is in the $w$-direction. If $\eta_1 > 0$, then  as 
shown in Equation~\ref{3.151}, we have 
$\ord_{R_1}(p_1) < r$.  Hence  by our inductive hypothesis applied to the ideal $I_1$ in
 $R_1$, we have,  for some positive integer $t$,      
  either $\frac{p}{q} = \frac{p_1}{q_1} \in \m_t$
or $\frac{q}{p} = \frac{q_1}{p_1} \in \m_t$.  

If $\eta_1 = 0$, then $\ord_{R_1}(p_1) = \ord_R(p) = r$. If $\ord_{R_1}(q_1) < r$,
then again by induction applied to the ideal $I_1$ in $R_1$, we have 
either $\frac{p}{q} = \frac{p_1}{q_1} \in \m_t$
or $\frac{q}{p} = \frac{q_1}{p_1} \in \m_t$ for some positive integer $t$.

If $\eta_1 = 0$ and $\ord_{R_1}(q_1) \ge r$,  then the ideal $I_1 = (p_1,q_1)R_1$  
satisfies the same hypothesis as the ideal $I = (p,q)R$.   Let $I_2 = (p_2, q_2)R_2$ denote
the transform of $I_1$ in $R_2$,  where $\frac{p_2}{q_2} = \frac{p}{q}$.  
  If $\ord_{R_2}(I_2) <  r$,  then 
for some positive integer $t$,      
  either $\frac{p}{q} = \frac{p_2}{q_2} \in \m_t$
or $\frac{q}{p} = \frac{q_2}{p_2} \in \m_t$.   If  $\ord_{R_2}(I_2) = r$,
we define $I_3 = (p_3, q_3)R_3$ and continue. Let $I_i = (p_i, q_i)R_i$ denote
the transform of $I$ in $R_i$, where $\frac{p_i}{q_i} = \frac{p}{q}$, for each $i \ge 1$. 

If $\ord_{R_i}(I_i) = r$ for all $i$, then the transform from $R_{i-1}$ to $R_i$ is
never in a direction where the associated exponent of $p$ is positive.
This is impossible since the
sets $\frak D_{x}, ~\frak D_{y},~\ldots,~ \frak D_{z}$ are all infinite
and the monomial $p$ contains at least one positive exponent. 
We conclude by induction that  there exists an  integer $t$ such that either  $p/q \in \m_t$ or $q/p \in \m_t$.
\end{proof}

\begin{corollary} \label{ratrank} Assume notation as in Theorem~\ref{3.3}. The
valuation domain $V$ has rational rank $d = \dim R$, and $V$ is a zero-dimension over $R$.
If $v$ is a valuation associated to $V$, then 
$v(x), v(y), \ldots, v(z)$ are rationally independent.  In particular,  $v$ is a
monomial valuation
\end{corollary}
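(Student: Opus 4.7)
The plan is to establish the four assertions in order: first, rational independence of $v(w_1), \ldots, v(w_d)$, where $w_1, \ldots, w_d$ denote the variables $x, y, \ldots, z$; second, that the dimension of $v$ over $R$ is zero; third, the equality $\mathrm{rat.rank}(v) = d$; and fourth, that $v$ is monomial in $w_1, \ldots, w_d$. The central input will be Lemma~\ref{3.6}, together with Theorem~\ref{3.3} which guarantees that $S = V$ is rank one and that $V$ birationally dominates every $R_n$.

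For rational independence I would argue by contradiction. Suppose $\sum_{i=1}^{d} n_{i} v(w_{i}) = 0$ is a nontrivial $\Z$-linear relation. Writing $n_i = n_i^+ - n_i^-$ with $n_i^+, n_i^- \ge 0$ and $n_i^+ n_i^- = 0$, set $p := \prod_i w_i^{n_i^+}$ and $q := \prod_i w_i^{n_i^-}$. These are monomials in $R$ with $v(p) = v(q)$ and disjoint supports, and nontriviality of the relation forces at least one of them to be a nonunit. If exactly one, say $q$, equals $1$, then $v(p) = 0$ while $p \in \m$, contradicting that $V$ dominates $R$. Hence both $p$ and $q$ lie in $\m$ and are distinct. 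Lemma~\ref{3.6} then provides an integer $t$ with either $p/q \in \m_t$ or $q/p \in \m_t$; since $V$ dominates $R_t$, this yields $v(p) \ne v(q)$, a contradiction.

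The remaining assertions follow quickly. The monomial hypothesis on the sequence (Setting~\ref{3.2}) gives $R/\m = \bigcup_i (R_i/\m_i) = V/\m_V$, so the dimension of $v$ over $R$ is zero. Abhyankar's inequality $\mathrm{rat.rank}(v) + \mathrm{tr.deg}_{R/\m}(V/\m_V) \le d$ combined with the lower bound $\mathrm{rat.rank}(v) \ge d$ from the previous step then forces $\mathrm{rat.rank}(v) = d$. Finally, to see $v$ is monomial, fix a nonzero $f \in \m$ and, using \cite[Proposition~1.3]{HubS} as in the proof of Theorem~\ref{3.3a}, write $f = \sum_i \lambda_i g_i$ with units $\lambda_i$ and distinct monomials $g_i$ in $w_1, \ldots, w_d$. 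Rational independence makes the values $v(g_i)$ mutually distinct, so the non-archimedean strict triangle inequality yields $v(f) = \min_i v(g_i)$, which is precisely the defining property of a monomial valuation in $w_1, \ldots, w_d$.

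The main obstacle is packaging the putative $\Z$-linear dependence into the form $v(p) = v(q)$ for two distinct nonunit monomials, so that Lemma~\ref{3.6} can be applied. Once this reduction is in place, the rest is standard bookkeeping: Abhyankar's inequality supplies the upper bound on rational rank, the monomial-transform hypothesis eliminates any residue-field extension, and the Huber--Swanson expansion promotes rational independence to the monomial-valuation property.
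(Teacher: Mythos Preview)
Your proof is correct and takes essentially the same approach as the paper's: both rewrite a putative rational dependence as $v(p)=v(q)$ for distinct monomials, invoke Lemma~\ref{3.6} to force a contradiction, and then finish with Abhyankar's inequality. You supply a few details the paper leaves implicit---the edge case where one of the two monomials is a unit, and an explicit argument for the monomial-valuation claim via the \cite{HubS} expansion and the strict triangle inequality---but the core idea is identical.
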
 

\begin{proof} 
Assume that 
$$
a_1v(x)+b_1v(y)+\cdots +c_1v(z)=a_2v(x)+b_2v(y)+\cdots +c_2v(z),
$$
with $(a_1, b_1, \ldots, c_1)$ and $(a_2, b_2, \ldots, c_2)$ being $d$-tuples 
of nonnegative integers. 
Then we have 
$$
v(x^{a_1}y^{b_1}\cdots z^{c_1})=v(x^{a_2}y^{b_2}\cdots z^{c_2}),
$$ 
and hence $x^{a_1}y^{b_1}\cdots z^{c_1}=x^{a_2}y^{b_2}\cdots z^{c_2}$ by Lemma~\ref{3.6}.
Thus we have
$$
a_1=a_2,~ b_1=b_2,~ \ldots,~ c_1=c_2.
$$
By \cite[Theorem~6.6.7]{SH}, we have
$$
 \rat.rank v + \tr.deg_{R/\m} k(v) \leq \dim(R).
$$
Since  $ d \leq \rat.rank v$,  we have $\rat.rank v=d$. 
It follows that $\tr.deg_{R/\m} k(v)=0$.
\end{proof}

\section{Infinite Directed Unions of Local Quadratic Transforms}  \label{c5}

Related to Item~\ref{0.2.2} of Remark \ref{0.2}, Example~\ref{NotUnionRR1}  demonstrates  the existence  
of  a nondiscrete rational rank $1$ valuation domain $V$ that birationally dominates and is zero-dimensional 
over a $3$-dimensional   regular local ring $R$  and  is such  that  
the sequence $\{(R_n, \m_n)\}_{n \ge 0}$   of local quadratic transforms of $R$ along $V$  
as  in Equation~\ref{0.3} fails to switch strongly infinitely infinitely often, 
that is, we may have $\bigcup_{n\ge0} R_n \subsetneq V$.

\begin{example}\label{NotUnionRR1}
Let $x, y$ be indeterminates over a field $k$,  and let $A_0 = k [[x, y]]$ 
be the formal power series ring in $x$ and $y$ over $k$. 
We describe a sequence  $\{ (A_n, \n_n) \}_{n \ge 0}$   of local quadratic transforms  of $A_0$, 
such that  $\n_n = (x_n, y_n)A_n$ is the maximal ideal of $A_n$, 
where for each even integer $n \ge 0$, we set $A_{n+1} = A_n[\frac{y_n}{x_n}]_{(x_n, \frac{y_n}{x_n}-1)}$,   
and for each odd
integer $n \ge 1$, we set   $A_{n+1} = A_n[\frac{x_n}{y_n}]_{(y_n, \frac{x_n}{y_n})}$.
Thus for each integer $k \ge 0$,  we have
$$
x_{2k+1} = x_{2k}, \quad    y_{2k+1} = \frac{y_{2k} - x_{2k}}{x_{2k}},  \quad  x_{2k+2} = \frac{x_{2k+1}}{y_{2k+1}}, \quad \text{ and  } \quad 
y_{2k+2} = y_{2k+1}.
$$
Notice that   $A_{n+1}$ is the localization of $A_n [x_{n+1}, y_{n+1}]$ at the maximal ideal generated by $x_{n+1}, y_{n+1}$.

Let $W = \bigcup_{n=0}^{\infty} A_n$.  By Remark~\ref{3.25},  $W$ is a valuation domain that birationally dominates $A_0$ and has
residue field $k$.    Let $w$ denote the valuation associated to $W$ such that $w(x)= 1$.  The fact that $W$ dominates  every $A_n$ implies
that  $W$ is  rational rank $1$ nondiscrete  and that  $w (x_{2k}) = w (x_{2k+1}) = w (y_{2k}) = w (y_{2k-1}) = \left( \frac{1}{2} \right)^k$,
for all $k \ge 0$.
Since for $k \ge 0$, $w (\n_{2k}) = \left( \frac{1}{2} \right)^{k}$ and $w (\n_{2k+1}) = \left( \frac{1}{2} \right)^{k+1}$,
it follows that
	$$\sum_{k \ge 0} w (\n_k) = \sum_{k \ge 0} \left( \frac{1}{2} \right)^k + \sum_{k \ge 0}  \left( \frac{1}{2} \right)^{k+1} = \frac{1}{1 - \frac{1}{2}} + \frac{\frac{1}{2}}{1 - \frac{1}{2}} = 3.$$

Let $p (x) \in k [[x]]$ be such that $x$ and $p (x)$ are algebraically independent over $k$ and let $z = x^4 p (x)$.
Let $R = k [x, y, z]_{(x, y, z)}$, $V = W \cap k (x, y, z)$, and $v = w|_{R (x, y, z)}$.
It follows that $V$ is a rational rank 1 nondiscrete valuation domain that birationally dominates $R$ and has residue field $k$.
Let $\{(R_n, \m_n) \}_{n \ge 0}$ be the sequence of local quadratic transforms of $R$ along $V$.
Since $v(z) \ge 4$,  we have $\m_n = (x_n, y_n, z_n)$, where $x_n, y_n$ are the generators of $\n_n$ 
as defined above and $z_n$ is defined inductively by $z_{n+1} = \frac{z_n}{x_n}$ for $n$ even and $z_{n+1} = \frac{z_n}{y_n}$ for $n$ odd.
Since we never divide in the $z$-direction, $\bigcup_{n\ge0} R_n \subset R_{(z R)}$, and thus $\bigcup_{n \ge 0} R_n \subsetneq V$.
\end{example}

By stringing together infinitely many finite sequences of monomial local quadratic transforms 
as in Theorem~\ref{3.3a} and interspersing between each two a 
local quadratic transform that is not monomial, we obtain
examples of infinite sequences $\{R_i\}_{i \ge 0}$  that switch strongly infinitely often and have the property that 
$V = \bigcup_{i \ge 0}R_i$ has rational rank less than $d = \dim R$. 
We give  explicit examples with $d = 3$ in Example~\ref{4.18}. We use the 
following remark.

\begin{remark} \label{4.175}  Let $(R,\m)$ be a Noetherian local domain, and 
let $V$ be a valuation domain dominating $R$. Let $v$ be a valuation associated to $V$.
Assume
that $\dim_{R/\m}\m/\m^2 = d$, and that there exist elements $x_1, \ldots, x_d$ in $\m$ 
such that $v(x_1) < v(x_2) < \cdots < v(x_d) < v(\m^2)$.
Then for each $y \in \m \setminus \m^2$, we have $v(y) \in \{v(x_1), \ldots, v(x_d) \}$.

To see this, let $R = I_0 \supsetneq I_1 \supsetneq I_2 \supsetneq \ldots$ be the sequence of valuation ideals of $V$ in $R$, where $I_{n+1} = \{ a \in R \mid v (a) > v (I_n) \}$.
For $1 \le i \le d$, define $J_i = \{ a \in R \mid v (a) \ge v (x_i) \}$.
Define $J_0 = R$ and $J_{d+1} = \{ a \in R \mid v (a) \ge v (\m^2) \}$.
The assumption that $v (x_d) < v (\m^2)$ implies that $J_{d+1} \subsetneq J_d$.
Thus we have the chain of inclusions of ideals,
	$$R = J_0 \supsetneq J_1 \supsetneq \ldots \supsetneq J_d \supsetneq J_{d+1} \supset \m^2.$$
Since $\lambda_{R} (R / \m^2) = d + 1$, it follows that for $0 \le i \le d$, $J_i / J_{i+1}$ is a simple $R$-module, and it follows that $J_{d+1} = \m^2$.
Thus for $0 \le i \le d + 1$, $I_i = J_i$, and for each $y \in \m \setminus \m^2$, $v (y) \in \{ v (I_1), \ldots, v (I_d) \} = \{ v (x_1), \ldots, v (x_d) \}$.
\end{remark} 

\begin{example} \label{4.18}
Let $(R, \m)$ be a $3$-dimensional regular local ring, and let $\m = (x, y, z) R$.  Define: 

$$ 
\begin{aligned} 
R_1 &= R[\frac{\m}{x}]_{(x, \frac{y}{x}, \frac{z}{x})} \quad\text{ and } \quad \m_1 = (x_1, y_1, z_1)R_1 =  (x, \frac{y}{x}, \frac{z}{x})R_1\\
R_2 &= R_1[\frac{\m_1}{y_1}]_{(\frac{x_1}{y_1}, y_1, \frac{z_1}{y_1})} \quad\text{ and } \quad \m_2 = (x_2, y_2, z_2)R_2 =  (\frac{x^2}{y}, \frac{y}{x}, \frac{z}{y})R_2 \\
R_3 &= R_2[\frac{\m_2}{z_2}]_{(\frac{x_2}{z_2}, \frac{y_2}{z_2},  z_2)} \quad\text{ and } \quad \m_3 = (x_3, y_3, z_3)R_3 =  (\frac{x^2}{z}, \frac{y^2}{x z}, \frac{z}{y})R_3 \\
R_4 &= R_3[\frac{\m_3}{x_3}]_{(x_3, \frac{y_3}{x_3} - 1, \frac{z_3}{x_3} - 1)} \quad\text{ and } \quad \m_4 = (x_4, y_4, z_4)R_4 =  (\frac{x^2}{z}, \frac{y^2}{x^3} - 1, \frac{z^2}{x^2 y} - 1)R_4 
\end{aligned}
$$
For each valuation domain $V$ birationally  dominating $R_4$, we have  $v (y) = \frac{3}{2} v (x)$ and $v (z) = \frac{7}{4} v (x)$.

Starting from $(R_4, \m_4)$ and $\m_4 = (x_4, y_4, z_4)R_4$, 
we make a sequence $R_4 \subset R_5 \subset R_6 \subset R_7$ of monomial local quadratic transforms 
with respect to the fixed basis $x_4, y_4, z_4$ of $\m_4$ such that $R_4$ to $R_5$ is in the $x$-direction, 
$R_5$ to $R_6$ is in the $y$-direction, and $R_6$ to $R_7$ is in the $z$-direction.
With $(x_7, y_7, z_7)R_7$ the monomial regular system of parameters for $\m_7$, 
we define the local quadratic transform $R_7$ to $R_8$ in a manner similar to that from $R_3$ to $R_4$.
Thus $ x_8:=x_7$, $y_8 := \frac{y_7}{x_8} - 1$, and $z_8 := \frac{z_7}{x_7} - 1$.
If $V$ is a valuation birationally dominating $R_8$, then $v (y_4) = \frac{3}{2} v (x_4)$ and $v (z_4) = \frac{7}{4} v (x_4)$.

In a similar manner, for each positive integer $n$,   
we inductively define the sequence of local quadratic transforms
 $R_{4n} \subset R_{4n+1} \subset R_{4n+2} \subset R_{4n+3} \subset R_{4n+4}$.   
For each valuation domain $V$ that  birationally dominating $R_{4n+4}$, 
we have  $v (y_{4n}) = \frac{3}{2} v (x_{4n})$ and $v (z_{4n}) = \frac{7}{4} v (x_{4n})$.

Theorem~\ref{3.3a} implies that the sequence $\{R_i\}_{i \ge 0}$   switches strongly infinitely often.
Hence $V = \bigcup_{i \ge 0} R_i$ is a rank one valuation domain by Remark \ref{rank} and $V$ is a zero-dimension over $R$.
The rational rank of $V$ is $1$, for if the rational rank of $V$ were greater than $1$, then by Remark~\ref{0.2}.\ref{0.2.4}, 
there exists an integer $n > 0$ and elements $w_1, w_2 \in R_{4n}$ such that 
$\ord_{R_{4n}} (w_1) = \ord_{R_{4n}} (w_2) = 1$ and $v (w_1), v (w_2)$ are rationally independent.
Since $v (x_{4n}) < v (y_{4n}) < v (z_{4n}) < v (\m_{4n}^2)$ and $v (x_{4n}), v (y_{4n}), v (z_{4n})$ are rationally dependent, 
Remark~\ref{4.175} implies that $v (w_1)$ and $v (w_2)$ are in $\{ v (x_{4n}), v (y_{4n}), v (z_{4n}) \}$ and hence are rationally dependent.
\end{example}

Theorem~\ref{valideals} implies the following result for an infinite sequence
of monomial local quadratic transforms.

\begin{construction}\label{3.8}
Let notation  be as in Theorem~\ref{3.3} and assume that 
the sets $\frak D_{x}, ~\frak D_{y},~\ldots,~ \frak D_{z}$ are all infinite.  Then:
\begin{enumerate} 
\item  
 There exists an 
infinite  decreasing chain $\{I_n\}_{n\geq 0}$  of $v$-ideals in $R$
such that each $I_n$ is a  monomial ideal  and $\lambda_{R}(I_n/I_{n+1})=1$ for each $n \geq 0$.
\item We have $\bigcap_{n \ge 1}I_n = (0)$.
\item  If $p$ is a monomial in $R$, then $pV \cap R = I_n$ for some $n \ge 0$. 
\item  $V =\bigcup_{n\geq 0}R_n$ is a rank one valuation domain.
\end{enumerate}
\end{construction}

\begin{proof}  Since the sequence $\{R_n\}_{n \ge 0}$ is monomial, we have $R/\m = R_n/\m_n$
for all $n \ge 0$. Since the sets  
$\frak D_{x}, ~\frak D_{y},~\ldots,~ \frak D_{z}$ are all infinite, the sequence 
$\{R_n\}_{n \ge 0}$ switches infinitely often by Theorem~\ref{3.3}. Remark~\ref{rank} implies
that $V  =\bigcup_{n\geq 0}R_n$ is a rank one valuation domain. Theorem~\ref{valideals}
implies items~1, 2 and 3.
\end{proof}

\section{Properties of infinite sequences of local quadratic transforms}\label{c6}

\begin{setting}\label{6.1}
Let $(R, \m)$ be a $d$-dimensional regular local ring with $d \ge 2$ and   
let $V$ be a zero-dimensional real valuation domain birationally dominating $R$ with a corresponding valuation $v$.  
Consider  the  infinite  sequence  $\{(R_i, \m_i)\}_{i \ge 0}$   of local quadratic transforms  of $R$ along $V$,
	$$
R ~ = ~ R_0 \subset R_1 \subset \cdots \subset R_n \subset \cdots \subset  ~\cup_{n \ge 0}R_n ~ \subset~ V.
$$
Let $s$ denote the infinite sum,
	$$s = \sum_{i=0}^{\infty} v (\m_i).$$
  Thus $s$ is either $\infty$ or a positive real number.
\end{setting}

In Theorem~\ref{6.3},  we describe the value of $s$ in the monomial case.

\begin{theorem}\label{6.3}
Assume the notation of Setting~\ref{6.1} and that $\m = (x, y, \ldots, z)R$, where $v (x), v (y), \ldots,  v(z)$ are rationally independent.
Then,
\begin{enumerate}
\item $s ~=~ \sum_{i=0}^{\infty} v (\m_i)  ~ \le   ~\frac{v (x) + v (y) + \cdots + v (z)}{(d - 1)}$.  In particular, $s$ is finite.
\item   The following are equivalent:
\begin{enumerate}
\item Equality holds in Item~1. 
\item The sequence $\{(R_n, \m_n)\}_{n\ge 0}$  switches strongly infinitely often.
\item $\underset{n \to \infty}{\lim} v (w_n) = 0$, for each variable $w \in \Delta (\m_0)$.
\end{enumerate}
\end{enumerate}
\end{theorem}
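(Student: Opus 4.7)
The plan is to track the sum $T_n := v(x_n) + v(y_n) + \cdots + v(z_n)$ along the sequence and extract item~(1) from an elementary telescoping identity, then read off the equivalences in item~(2) from the same identity together with one extra algebraic argument using rational independence. First I would use the rational independence of $v(x), v(y), \ldots, v(z)$: by Remark~\ref{0.2}(1) the sequence $\{(R_n, \m_n)\}$ is monomial with respect to $x, y, \ldots, z$, and each local quadratic transform acts on the tuple $(v(x_n), \ldots, v(z_n))$ by an elementary $\mathbb{Z}$-matrix of determinant $1$, so the $v(w_n)$ remain rationally independent at every stage. In particular there are no ties, and the direction at step $n$ is the unique variable attaining $v(\m_n) = \min_w v(w_n)$. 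Since the direction variable is preserved while the other $d - 1$ variables are divided by it,
$$T_{n+1} ~=~ T_n - (d - 1)\, v(\m_n),$$
and telescoping gives $T_N = T_0 - (d-1)\sum_{k=0}^{N-1} v(\m_k)$. Because each $v(w_N) > 0$ forces $T_N > 0$, the partial sums are bounded above by $T_0/(d-1) = (v(x) + \cdots + v(z))/(d-1)$, proving item~(1).

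Next, for item~(2), the equivalence (a)$\iff$(c) drops out of the identity above: equality in (1) says $\lim_N T_N = 0$, and since $T_N$ is a sum of $d$ strictly positive terms $v(w_N)$, this holds if and only if $v(w_N) \to 0$ for every $w$. For (b)$\implies$(c), Theorem~\ref{3.3} reduces (b) to the condition that every $\mathfrak{D}_w$ is infinite, and item~(1) gives $v(\m_n) \to 0$. I would then observe that $v(w_n)$ is non-increasing in $n$ (it is constant precisely at the steps in $\mathfrak{D}_w$ and decreases by $v(\m_n)$ otherwise), and at each $n \in \mathfrak{D}_w$ one has $v(w_n) = v(\m_n)$; so the subsequence $\{v(w_n)\}_{n \in \mathfrak{D}_w}$ tends to $0$, and a monotone sequence with a subsequence tending to $0$ itself tends to $0$.

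The main obstacle will be the direction (c)$\implies$(b), since convergence of the $v(w_n)$ does not by itself preclude some $\mathfrak{D}_w$ from being finite -- the rational independence hypothesis has to be invoked precisely here. My plan is to assume (c) and, for contradiction, suppose some $\mathfrak{D}_w$ is finite; after relabeling take $w = x$ and pick $N_0$ past the last element of $\mathfrak{D}_x$. For $n \ge N_0$ one has $v(x_{n+1}) = v(x_n) - v(\m_n)$, so $v(x_n) \to 0$ forces $\sum_{k \ge N_0} v(\m_k) = v(x_{N_0})$. For each $w \ne x$, set $\sigma_w := \sum_{k \ge N_0,\ k \in \mathfrak{D}_w} v(\m_k)$; the analogous telescoping for $w$, together with $v(w_n) \to 0$, gives $\sigma_w = v(x_{N_0}) - v(w_{N_0})$. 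Since every $k \ge N_0$ lies in a unique $\mathfrak{D}_w$ with $w \ne x$, summing yields
$$\sum_{w \ne x} v(w_{N_0}) ~=~ (d-2)\, v(x_{N_0}).$$
For $d = 2$ this already contradicts $v(y_{N_0}) > 0$. For $d \ge 3$, because $(v(x_{N_0}), \ldots, v(z_{N_0}))$ is the image of $(v(x), \ldots, v(z))$ under an invertible integer matrix, it is still rationally independent, so the displayed relation -- a nontrivial $\mathbb{Q}$-linear relation with integer coefficients -- cannot hold. The delicate point is assembling the right combinatorial identity from the partial telescoped sums and recognizing that rational independence is exactly the ingredient that defeats it.
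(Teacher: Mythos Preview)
Your proof is correct and follows essentially the same approach as the paper. The telescoping identity $T_N = T_0 - (d-1)\sum_{k<N} v(\m_k)$ is exactly the paper's Equation~(6.3.1), and your arguments for (a)$\Leftrightarrow$(c) and (b)$\Rightarrow$(c) match the paper's; for (c)$\Rightarrow$(b) you and the paper derive the same contradicting relation $\sum_{w\ne x} v(w_{N_0}) = (d-2)\,v(x_{N_0})$, only you package the computation via the partial sums $\sigma_w$ while the paper works directly with the two complementary totals --- the content is identical.
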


\begin{proof}
For item~ 1,  we prove by induction that the following equation holds for every integer $n \ge 0$.
\begin{equation}\label{6.3.1}
\left( \sum_{i=0}^{n - 1} v (\m_i) \right) + \frac{v (x_n) + v (y_n) + \cdots + v (z_n)}{d-1} = \frac{v (x) + v (y) + \cdots + v (z)}{d-1}.
\end{equation}
Equality is clear in the case where $n = 0$.
Assume  that the claim is true for $n$.
By re-arranging variables, we may assume that $v (x_n) = v (\m_n)$, 
so that the local quadratic transform from $R_n$ to $R_{n+1}$ is in the $x$-direction.
Hence for every variable $w$ that is not $x$, $w_{n+1} := w_n / x_n$, so $v (w_{n+1}) = v (w_n) - v (x_n)$.
Thus,
\begin{align*}
	&\left( \sum_{i=0}^{n} v (\m_i) \right) + \frac{v (x_{n+1}) + v (y_{n+1}) + \cdots + v (z_{n+1})}{d-1} \\
	=& \left( \sum_{i=0}^{n - 1} v (\m_i) \right) + v (x_n) + \frac{( v (x_n) + v (y_n) + \cdots + v (z_n) ) - (d - 1) v (x_n)}{d - 1} \\
	=& \left( \sum_{i=0}^{n - 1} v (\m_i) \right) + \frac{v (x_n) + v (y_n) + \cdots + v (z_n)}{d - 1} \\
	=& \frac{v (x) + v (y) + \cdots + v (z)}{d-1}.
\end{align*}

By taking limits, we conclude that $s = \sum_{i=0}^{\infty} v (\m_i) \le \frac{v (x) + v (y) + \cdots + v (z)}{(d - 1)}$.

  For item~ 2, we let $E_n:=\sum_{i=0}^{n - 1} v (\m_i)$. We first show
$(a) \Rightarrow (c)$:
 By assumption, 
we have 
$$
\underset{n \to \infty}{\lim} E_n = \frac{v (x) + v (y) + \cdots + v (z)}{d-1} 
\quad \text{thus}   \quad  
 \underset{n \to \infty}{\lim} \frac{ v (x_n) + v (y_n) + \cdots + v (z_n)}{d-1} =0.
$$
For each  $w \in \Delta (\m_0)$,  the sequence $\{v(w_n)\}_{n\ge 0}$ is a
nonincreasing sequence of positive real numbers. Hence each 
$\underset{n \to \infty}{\lim} v (w_n)$ exists and is a nonnegative real number. 
Since $d \ge 2$, we have $\underset{n \to \infty}{\lim} \{ v (x_n) + v (y_n) + \cdots + v (z_n)\} =0,$
and thus  we
have 
$\underset{n \to \infty}{\lim} v (w_n) = 0$, for each variable $w \in \Delta (\m_0)$.

$(c) \Rightarrow (a)$: .
Assume  that $\underset{n \to \infty}{\lim} v (w_n) = 0$, 
for each variable $w \in \Delta (\m_0)$. That is, we have
$$
\underset{n \to \infty}{\lim} v (x_n)~ =~ 0, ~~\underset{n \to \infty}{\lim} v (y_n) ~= ~0,~\cdots,~\underset{n \to \infty}{\lim} v (z_n)~ =~ 0.
$$
 By Equation~\ref{6.3.1} we have
$$
E_n~=~\frac{v (x) + v (y) + \cdots + v (z)}{d-1}~-~\frac{v (x_n) + v (y_n) + \cdots + v (z_n)}{d - 1}.
$$
Hence
$$
\begin{aligned}
\underset{n \to \infty}{\lim} E_n &=~ \underset{n \to \infty}{\lim} \Big\{\frac{v (x) + v (y) + \cdots + v (z)}{d-1}\Big\} ~-~
                                    \underset{n \to \infty}{\lim} \Big\{ \frac{v (x_n) + v (y_n) + \cdots + v (z_n)}{d - 1}   \Big\}\\ 
                                  &=~\frac{v (x) + v (y) + \cdots + v (z)}{d-1}.
\end{aligned}
$$
The last equality follows by assumption. 

$(c) \Rightarrow(b)$:
Assume that $\underset{n \to \infty}{\lim} v (w_n) = 0$, 
for each variable $w \in \Delta (\m_0)$. 
Suppose by way of contradiction that the  sequence $\{(R_n, \m_n)\}_{n\ge 0}$ does not switch strongly infinitely often.
By Theorem~\ref{3.3}, there exists a variable $w \in \Delta (\m_0)$ such that $\frak D_w$ is finite.
That is, there exists positive integer $n_0$ such that the sequence 
$R_{n_0} \subset R_{n_0+1} \subset R_{n_0+2}\subset \cdots $ never goes in the $w$-direction. 
By replacing $n_0$ by zero, we may assume that 
the sequence  $\{(R_n, \m_n)\}_{n\ge 0}$ never goes in the $w$-direction.
That is, 
$$
v(w_n) \neq \min \{v(x_n), v(y_n), \ldots, v(w_n), \ldots, v(z_n)\}\quad \text{for every integer $n \ge 0$}.
$$
 From Equation~\ref{6.3.1}, we have
$$
\begin{aligned}
v(w)&=v(w_n) + E_n \quad \text{and}\\
v(x)+v(y)+\cdots+\widehat{v(w)}+\cdots +v(z)&=v(x_n)+v(y_n)+\cdots+\widehat{v(w_n)}+\cdots +v(z_n)+(d-2)E_n.
\end{aligned}
$$
By assumption, we have
$$
v(w)=\underset{n \to \infty}{\lim} E_n  \quad \text{and}\quad v(x)+v(y)+\cdots+\widehat{v(w)}+\cdots +v(z)=(d-2)\underset{n \to \infty}{\lim} E_n.
$$
Using both equalties, we have $v(x)+v(y)+\cdots+\widehat{v(w)}+\cdots +v(z)=(d-2)v(w)$, which is a contradiction,
since $v(x),v(y), \ldots, v(z)$ are rationally independent.

$(b) \Rightarrow (c)$:
By item (1), we have $s:=\sum_{i=0}^{\infty}v(\m_i)$ is finite.
Hence $\underset{n \to \infty}{\lim} v(\m_n)=\underset{n \to \infty}{\lim} E_{n+1}-\underset{n \to \infty}{\lim} E_n=0$.
Fix $w \in \Delta(\m_0)$, we have $v(w_n)=v(\m_n)$ for every $n$ such that the transform from $R_n$ to $R_{n+1}$ is in the $w$-direction.
By assumption, $\frak D_w=\infty$, and hence $\underset{n \to \infty}{\lim} v(w_n) = \underset{n \to \infty}{\lim} v(\m_n) = 0$
\end{proof}

\begin{proposition}\label{6.2}
Assume the notation of Setting~\ref{6.1}. Then we have the following:

\begin{enumerate}
\item If $V$ is a DVR, then $s = \infty$, and $\cup_{n \ge 0}R_n = V$.
\item If $s = \infty$, then the sequence $\{R_n\}_{n \ge 0}$  switches strongly infinitely often. 
\item If $V$ has rational rank $r \ge 2$, then $s < \infty$. Thus if $s = \infty$, 
then $V$ has rational rank $r = 1$.
\item If $V$ has rational rank $1$ and $V$ is not a DVR, then both $s = \infty$ and $s < \infty$ are possible.
\end{enumerate}
\end{proposition}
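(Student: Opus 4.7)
For item (1), I normalize $v$ to take integer values; then $V$ dominating each $R_i$ forces $v(\m_i) \ge 1$, so $s = \sum_{i\ge 0} v(\m_i) = \infty$, and the equality $V = \bigcup_n R_n$ is Remark~\ref{0.2}(\ref{0.2.2}). For item (2), I invoke \cite[Prop.~23]{GMR}, which says $s = \infty$ implies $V = \bigcup_n R_n$; since $V$ is a real (and hence rank-one) valuation by Setting~\ref{6.1}, Shannon's \cite[Prop.~4.18]{S} or Granja's \cite[Thm.~13]{Gr} (cited in Remark~\ref{rank}) then forces the sequence to switch strongly infinitely often.

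For item (3), I argue by contrapositive. Assume $s = \infty$ and let $r$ denote the rational rank of $V$. By (2), the sequence switches strongly infinitely often and $V = \bigcup_n R_n$ is rank one. Supposing for contradiction $r \ge 2$, Remark~\ref{0.2}(\ref{0.2.4}) lets me pass to some $R_N$ whose maximal ideal has a regular system of parameters $w_1, \ldots, w_d$ with $v(w_1), \ldots, v(w_r)$ rationally independent; after reindexing I may take $N=0$. The plan is to adapt the telescoping identity in the proof of Theorem~\ref{6.3}: at every step, for each variable $w_i$, one has $v(w_{i,n+1}) \ge v(w_{i,n}) - v(\m_n)$, with equality exactly when the transform at $w_i$ is monomial. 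Summing over $i$ yields $(d-1)v(\m_n) \ge \sum_i v(w_{i,n}) - \sum_i v(w_{i,n+1})$, from which one would like to obtain the bound $s \le \frac{v(w_1) + \cdots + v(w_d)}{d-1} < \infty$, contradicting $s = \infty$. The main obstacle is that non-monomial steps --- where $w_{i,n+1} = w_{i,n}/x_n - c$ for some nonzero residue $c \in V/\m_V$ --- cause $v(w_{i,n+1})$ to jump strictly above $v(w_{i,n}) - v(\m_n)$, so that naive summation only yields a lower bound on $E_n = \sum_{k<n} v(\m_k)$ via telescoping, not an upper bound. To push through the upper bound, one uses the rational independence of $v(w_1)$ and $v(w_2)$: this forces the transforms of $w_1$ and $w_2$ to behave monomially often enough (in particular, ties involving both distinguished variables simultaneously are controlled by rational independence) that the total excess contributed by cancellations is finite, yielding the desired finite upper bound on $s$.

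For item (4), I need two examples. An example with $s < \infty$ is supplied by Example~\ref{NotUnionRR1}, in which $V$ has rational rank $1$, is not a DVR, and $s = 3$. For an example with $s = \infty$, I construct a rank-one, rational-rank-one, non-DVR valuation $V$ whose associated sequence of local quadratic transforms has $v(\m_n)$ decaying only harmonically (like $1/n$) rather than geometrically, so that $\sum v(\m_n)$ diverges; this is achieved by taking a value group that is a dense subgroup of $\mathbb{Q}$ and arranging the sequence of directions so that the maximal-ideal values decrease slowly, paralleling the construction of Example~\ref{NotUnionRR1} but with a different choice of direction pattern.
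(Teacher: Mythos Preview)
Your treatments of items (1) and (2) match the paper's proof essentially verbatim.

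For item (3), your approach has a genuine gap. You correctly identify the obstacle: tracking all $d$ variables through the sequence, the inequality $v(w_{i,n+1}) \ge v(w_{i,n}) - v(\m_n)$ telescopes the wrong way, and non-monomial steps create uncontrolled upward jumps. But your proposed fix --- that rational independence of $v(w_1), v(w_2)$ forces the transforms to ``behave monomially often enough'' so that the total excess is finite --- is not an argument. Once a jump occurs at $w_1$, the new value $v(w_{1,n+1})$ can be arbitrary, rational independence with $v(w_{2,n+1})$ may be lost, and you have no mechanism to bound the accumulated excess. The paper's argument is structurally different and avoids the jump problem entirely: rather than tracking a fixed $d$-tuple, it tracks only a \emph{pair} $x_n, y_n$ of regular parameters with rationally independent values, and \emph{replaces} them at each step. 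Specifically, if $z \in \m_n$ has minimal value, then $v(z)$ is rationally independent from at least one of $v(x_n), v(y_n)$ (say $v(y_n)$), so one sets $x_{n+1} := z$ and $y_{n+1} := y_n/z$. Since $v(y_n) > v(z)$, the element $y_n/z$ is genuinely a regular parameter of $R_{n+1}$ with value exactly $v(y_n) - v(z)$, no jumps. This yields $\big(\sum_{i \le n} v(\m_i)\big) + v(x_{n+1}) + v(y_{n+1}) \le \big(\sum_{i < n} v(\m_i)\big) + v(x_n) + v(y_n)$, hence $s \le v(x) + v(y) < \infty$.

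For item (4), your choice of Example~\ref{NotUnionRR1} for the case $s < \infty$ is valid (the paper instead cites Example~\ref{4.18}, where additionally the sequence switches strongly infinitely often, but either suffices). Your $s = \infty$ sketch, however, is not a construction: saying ``arrange directions so that $v(\m_n)$ decays like $1/n$'' does not explain how to realize this as the sequence along an actual zero-dimensional, rational-rank-one, non-DVR valuation. The paper handles this by pointing to the explicit quotient sequences $\{(A_n,\n_n)\}$ built in Examples~\ref{7.13} and \ref{7.14}, where blocks of $2^n$ monomial steps followed by a single non-monomial step produce $\nu(\n_i)$ of size roughly $2^{-n}$ repeated $2^n$ times, giving a divergent sum.
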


\begin{proof}
If $V$ is discrete, we may assume the value group of $V$ is 
$\Z$, and hence  $v (\m_n) \ge 1$ for all $n \ge 0$. Thus item~1 is clear.

Item~2 is proved by Granja, Martinez and Rodriguez in \cite[Proposition~23]{GMR}.

To see Item~3, assume that $V$ has rational rank $r \ge 2$ and suppose by way of contradiction that $s = \infty$.
By Item~2, the sequence switches strongly infinitely often.
By Remark~\ref{0.2}.\ref{0.2.4}, there is some $n > 0$ and 
elements $x, y$ that are part of some regular system of parameters for $R_n$ such that $v (x), v (y)$ are rationally independent.
By replacing $R$ by $R_n$, we may assume that there are elements $x, y$ in some regular system of parameters for $R$
 such that $v (x), v (y)$ are rationally independent.

We  show that $s \le v (x) + v (y)$ by inductively proving that for all $n \ge 0$, 
there are elements $x_n, y_n$ of some regular system of parameters for $R_n$
 such that $v(x_n), v(y_n)$ are rationally independent and $\left( \sum_{i=0}^{n - 1} v (\m_i) \right) + v (x_n) + v (y_n) \le v (x) + v (y)$.

Taking  $x_0 = x$ and $y_0 = y$, the base case $n = 0$ is clear. 

Assume  the claim is true for $n$. Thus  we have elements $x_n, y_n \in \m_n$ such that 
$v(x_n), v(y_n)$ are rationally independent and 
\begin{equation} \label{6.2.1}
\left( \sum_{i=0}^{n - 1} v (\m_i) \right) + v (x_n) + v (y_n) ~ \le ~ v (x) + v (y).
\end{equation}
Let $z \in m_n$ denote an element of minimal $v$-value.
Then $v (z)$ and at least one of $v (x_n), v (y_n)$ are rationally independent, 
so we may assume without loss of generality that $v (z), v (y_n)$ are rationally independent.
Thus $z, y_n$ are part of a regular system of parameters for $R_n$.
Set $x_{n+1} = z$ and $y_{n+1} = \frac{y_n}{z}$, so $v (x_{n+1}), v (y_{n+1})$ are rationally independent, and $v (y_{n+1}) = v (y_n) - v (z)$.
We have $v (\m_n) = v (z) \le v (x_n)$.
Thus,
\begin{align*}
	\left( \sum_{i=0}^{n} v (\m_i) \right) + v (x_{n+1}) + v (y_{n+1})
	&=  \left( \sum_{i=0}^{n - 1} v (\m_i) \right) + v (z) + v (z) + v (y_n) - v (z) \\
	&\le \left( \sum_{i=0}^{n - 1} v (\m_i) \right) + v (x_n) + v (y_n) \\
	&\le v (x) + v (y)
\end{align*}
where the last inequality follows from Equation~\ref{6.2.1}.

We conclude that 
 $s < \infty$, in contradiction to the assumption that $s = \infty$.
This proves Item~3.

Example~\ref{4.18} shows that $s < \infty$ is possible in the case  where the rational rank of $V$ is $1$ and $V$ is not a DVR.
The pattern gives $s = (1 + \frac{1}{2} + \frac{1}{4} + \frac{1}{4}) + (\frac{1}{4} + \frac{1}{8} + \frac{1}{16} + \frac{1}{16}) + \ldots$, 
so $s = \sum_{n=0}^{\infty} \frac{1}{2^n} + 2 \sum_{n=1}^{\infty} \frac{1}{4^n} = \frac{8}{3}$.
To complete the proof of item~4, we show in Examples~\ref{7.13} and \ref{7.14} that 
$s = \infty$ is possible in the case  where the rational rank of 
$T=\bigcup_{n \ge 0}A_n$ is $1$ and $T$ is not a DVR.
\end{proof}

\begin{setting} \label{htonesetting}
Let $(R, \m)$ be a $d$-dimensional regular local ring with $d \ge 2$,
and  with $R := R_0$, let $\{(R_n,\m_n)\}_{n \ge 0}$ be an infinite sequence 
of local quadratic transforms with $\dim R_n = \dim R$ for all $n$.
Let $y_n \in \Delta(\m_n)$ be such that the transform from $R_n$ to $R_{n+1}$ is
in the $y_n$-direction.
Let $S:=\bigcup_{n \ge 0}R_n$. It is well known that $S$ is a normal local domain 
with unique maximal 
ideal $\m_S:=\bigcup_{n\ge 0}\m_n$.
Assume that for some
nonnegative integer $j$ there exists a regular prime element $x$ in $R_j$
such that $S \subset (R_j)_{xR_j}$.  In considering properties of the directed union,
we may assume the sequence starts at $R_j$. Thus with a change of notation,  we 
assume that $j = 0$.  Let $P:=xR_{xR} \cap S$ and let $ T:=S /P$ and $\m_T:=\m_S /P$.
For each $n \ge 0$, let $x_n$ be the transform of $x$ in $R_n$.
Then $P=\bigcup_{n\ge 0}x_n R_n$. For each $n \ge 0$, let
$$
A_n:=\frac{R_n}{x_n R_n} \quad \text{and} \quad \n_n:=\frac{\m_n}{x_n R_n}.
$$
Each $(A_n, \n_n)$ is a $(d-1)$-dimensional regular local ring. Moreover 
$\{(A_n, \n_n)\}_{n\ge 0}$ is an infinite sequence of local quadratic transforms.
Hence $T=\bigcup_{n \ge 0}A_n$ is a normal  local domain with 
maximal ideal $\m_T=\bigcup_{n \ge 0}\n_n$.
Let $\nu_x$ be the $x$-adic valuation associated with the valuation domain $R_{xR}$.
 For an element $g \in S$, let $\overline{g}$ denote  the image of $g$ in $T$.
\end{setting} 

\begin{theorem} \label{htonedirected}
Let notation be as in Setting~\ref{htonesetting}. The following are equivalent.
\begin{enumerate}
\item  The sequence $\{(R_n, \m_n)\}_{n \ge 0}$ is height one directed. 
\item  $S$ is a valuation domain.
\item $T$ is a rank one valuation domain with associated real valuation $\nu$ 
such that $\sum_{n \ge 0} \nu(\n_n) = \infty$.
\item  The sequence $\{(A_n, \n_n)\}_{n \ge 0}$ switches strongly infinitely often,
and for the associated real valuation $\nu$ we have $\sum_{n \ge 0} \nu(\n_n) = \infty$.
\end{enumerate}
\end{theorem}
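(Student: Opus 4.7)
The plan is to establish the equivalence through the cycle $(3) \Leftrightarrow (4)$, $(2) \Rightarrow (1)$, $(4) \Rightarrow (2)$, and $(1) \Rightarrow (4)$. The first two are short. For $(3) \Leftrightarrow (4)$: the direction $(3) \Rightarrow (4)$ is Proposition~\ref{6.2}(2) (the [GMR] result that $\sum \nu(\n_n) = \infty$ forces switching strongly infinitely often), while $(4) \Rightarrow (3)$ is Remark~\ref{rank} (switching implies $T = \bigcup A_n$ is a rank-one valuation domain); both statements share the hypothesis $\sum \nu(\n_n) = \infty$. For $(2) \Rightarrow (1)$: if $S$ is a valuation domain, Granja's Proposition~7 yields $\rank S \in \{1,2\}$, and rank one is impossible because $S \subsetneq (R_0)_{xR_0}$ is a strict containment in a DVR with the same fraction field, which would force that DVR to equal $Q(R)$, contradicting $1/x \notin (R_0)_{xR_0}$. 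Hence $S$ has rank two, and Granja's Theorem~13 yields height one directedness.

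For $(4) \Rightarrow (2)$, the central technical step, I construct a rank-two valuation $v$ on $Q(R)$ by composition:
\[
v(f) \,=\, \bigl(\nu_x(f),\, \nu(\overline{f/x^{\nu_x(f)}})\bigr) \,\in\, \Z \oplus_{\text{lex}} \Gamma_\nu,
\]
where $\nu$ is the rank-one valuation on $T$ (from $(4)$) and the overline denotes the image in the residue field $Q(T)$ of the DVR $R_{xR}$. Let $V$ be its valuation ring. The containment $S \subseteq V$ is immediate from $\nu_x(s) \ge 0$ for $s \in S$ together with $\bar{s} \in A_n \subset T$ whenever $\nu_x(s) = 0$. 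For $V \subseteq S$, given $f \in V$ I would iteratively produce lifts $g_i \in R_{m_i}$ of successive $x$-adic coefficients of $f$, using at each step that $T$ is a valuation domain to lift elements of $Q(T)$ to some $A_{m_i}$. The hypothesis $\sum \nu(\n_n) = \infty$ is what forces this iterative process to terminate within some single $R_n$, rather than producing an infinite series that represents $f$ only as an asymptotic limit outside $S$.

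For $(1) \Rightarrow (4)$: assuming height one directedness, I first show $\{(A_n, \n_n)\}$ switches strongly infinitely often. Suppose not; then $\bigcup A_n \subset (A_k)_{\bar{w}A_k}$ for some $k$ and some $w \in R_k$ whose image $\bar{w}$ generates a height one prime of $A_k$. The irreducibility of $\bar{w}$ in $A_k$ allows me to choose the lift $w$ itself to be a prime element of $R_k$ (any $R_k$-factorization of $w$ must have all but one factor mapping to a unit in $A_k$). The subsequent LQTs from $R_k$ onwards avoid both the $x$- and $w$-directions, so checking that all generators of $R_n$ for $n > k$ have $\nu_w \ge 0$, I conclude $\bigcup R_n \subset (R_k)_{wR_k}$, a height-one localization distinct from $(R_k)_{x_kR_k}$, contradicting the uniqueness in the height one directed hypothesis. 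With switching established, Remark~\ref{rank} gives $T$ rank one, the composition valuation $V$ is defined, $S \subseteq V$, and height one directedness combined with the iterative expansion from $(4) \Rightarrow (2)$ forces $S = V$; a finite $\sum \nu(\n_n)$ would produce an element of $V \setminus S$, so $\sum \nu(\n_n) = \infty$ must hold.

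The hardest step is the termination argument in $(4) \Rightarrow (2)$: one must convert the qualitative divergence $\sum \nu(\n_n) = \infty$ into an effective statement forcing the $x$-adic expansion of an arbitrary $f \in V$ to stabilize inside some $R_n$. The auxiliary calculations in $(1) \Rightarrow (4)$—lifting $\bar{w}$ to a prime element of $R_k$ and tracking $\nu_w$ through the subsequent LQTs in the non-$x$, non-$w$ directions—are more standard but require careful bookkeeping to verify that the localization $(R_k)_{wR_k}$ really contains the entire tail of $\{R_n\}$.
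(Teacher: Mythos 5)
Your outer architecture matches the paper's (composite valuation for the hard direction, Granja's Theorem~13 for the equivalence with height one directedness, Shannon/GMR for $(3)\Leftrightarrow(4)$), and the easy legs ($(3)\Leftrightarrow(4)$ via Proposition~\ref{6.2}(2) and Remark~\ref{rank}, and $(2)\Rightarrow(1)$ via the rank argument) are fine. The genuine gap is that the two places where the theorem has real technical content are only gestured at. For $(4)\Rightarrow(2)$, your valuation $v$ is exactly the paper's composite ring $W$, and $S\subseteq V$ is indeed easy; but the whole theorem lives in the reverse containment $V\subseteq S$ (the paper's Claim~\ref{compval}), and your sketch does not supply it. Worse, the mechanism you describe is doubtful as stated: if $f\in V$ with $\nu_x(f)=0$, you can lift $\overline{f}\in T$ to some $g_0\in R_{m_0}$, but the next ``coefficient'' $\overline{(f-g_0)/x}$ lies only in the residue field of $R_{xR}$ and need not lie in $T$ (its $\nu$-value can be negative), so there is nothing to lift at the second step; and since a general $f\in Q(R)$ has an infinite $x$-adic expansion, the issue is not that an expansion ``terminates'' but that $f$ itself lands in some $R_n$. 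The paper's proof instead writes $\alpha=x^t\frac{g}{h}$ with $g,h\in R\setminus xR$ and tracks the transforms of the ideal $(g,h)R$ along the sequence, using the inequality $\nu(\widetilde{g_n})\le\nu(\widetilde{g_0})-\sum_{i=0}^{n-1}\nu(\n_i)$ and the divergence of $\sum\nu(\n_i)$ to force one of $g_n,h_n$ to become a unit, plus a separate argument (multiplying by $(y_0\cdots y_n)^t$) to reduce the case $\nu_x(\alpha)=t>0$ to the case $t=0$. None of this appears in your proposal, and it is the heart of the matter.

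The second gap is in the divergence part of your $(1)\Rightarrow(4)$, which as written is circular: you invoke ``the iterative expansion from $(4)\Rightarrow(2)$'' to conclude $S=V$, but that expansion's termination is exactly what the hypothesis $\sum\nu(\n_n)=\infty$ was supposed to provide, i.e.\ you are assuming the conclusion; and the key claim ``a finite $\sum\nu(\n_n)$ would produce an element of $V\setminus S$'' is asserted with no construction. The paper avoids both problems by going $(1)\Rightarrow(2)$ (Granja), noting $T=S/P$ is then automatically a valuation domain of dimension one, and proving divergence by contradiction with an explicit witness: if $s<\infty$, take $z\in\Delta(\m_0)$ with $\nu_x(z)=0$ and $N$ with $N\nu(\overline{z})>s$, set $f=x-z^N$, and show by the inductive Claim~\ref{htoneclaim} that $S\subset R_{fR}$, contradicting height one directedness since $R_{fR}\ne R_{xR}$. (One could also exhibit $x/z^N\in V\setminus S$ directly, but some such construction must be made; it does not follow from generalities.) Finally, the switching half of your $(1)\Rightarrow(4)$ -- lifting $\overline{w}$ to a prime $w\in R_k$ and claiming $\bigcup R_n\subset (R_k)_{wR_k}$ -- requires showing that the image in $A_n$ of the transform of $w$ is always divisible by the transform of $\overline{w}$, precisely the ``bookkeeping'' you defer; the paper sidesteps this entirely by deducing $(4)$ from $(3)$ via Shannon and Granja rather than arguing on lifts. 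Until Claim~\ref{compval}-type and Claim~\ref{htoneclaim}-type arguments are actually carried out, the proposal is a plan rather than a proof.
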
 

\begin{proof}
$(1) \Rightarrow (2)$: This is proved by Granja in \cite[Theorem~13]{Gr}.

$(2) \Rightarrow (1)$: By \cite[Proposition~7]{Gr}, the valuation domain $S$ has 
rank  either $1$ or  $2$.
By assumption, the sequence $\{(R_n, \m_n)\}_{n \ge 0}$ does not switch strongly infinitely often.
Hence by \cite[Theorem~13]{Gr},  the sequence $\{(R_n, \m_n)\}_{n \ge 0}$ is 
height one directed and $S$ has rank $2$.

$(2) \Rightarrow (3)$: Assume that $S$ is a valuation domain. Then $T = S/P$ is a
valuation domain of the field $R_{xR}/xR_{xR}$.
 Since $P$ is a nonzero nonmaximal prime ideal in $S$
and $\dim S=2$,  we have 
$\dim T=1$.
It remains to show that $s:=\sum_{n \ge 0} \nu(\n_n) = \infty$.
Suppose by way of contradiction that $s < \infty$. Let $z \in \Delta(\m_0)$ be such that $\nu_x(z)=0$, (that is $zR \neq xR$).
By the Archimedean property, there exists an integer $N > 0$ such that $N \nu(\overline{z}) > s$.
Consider the following element
$$
f:=x - z^N.
$$
For simplicity of notation, we set $g:=z^N$. Then $\nu(\overline{g})=\nu(\overline{z^N})>s$.
For each $n \ge 0$, let $f_n$ be the transform $f$ in $R_n$. 
Let $\nu_f$ be the $f$-adic valuation associated with the valuation domain $R_{fR}$.
To see $S \subset R_{fR}$, we  prove by induction on $n$ the following claim;
\begin{claim}\label{htoneclaim}
For every integer $n \ge 0$, there exists $g_n \in R_n$ such that 
$$
f_n=x_n-g_n, \quad \text {where $~\nu_f(f_n) = 1, \quad  \nu_f(g_n)=0~$ and $~\nu(\overline{g_n}) > \sum_{i=n}^{\infty}\nu(\n_i)$.}
$$
\end{claim}

\begin{proof}
 The case where $n=0$ is clear by construction. Assume that the claim holds for $n$.
Since   the transform from $R_n$ to $R_{n+1}$ is in the $y_n$-direction, we have 
$$
f_{n+1}=\frac{f_n}{y_n}=\frac{x_n}{y_n}-\frac{g_n}{y_n}=x_{n+1}-\frac{g_n}{y_n}.
$$
We set  $g_{n+1}:=\frac{g_n}{y_n}$. Then we have 
$$
\begin{aligned}
&\nu_f(g_{n+1})=\nu_f(g_n)-\nu_f(y_n)=0 \quad \text{and}\\
&\nu(\overline{g_{n+1}}) =\nu\Big(\overline{\frac{g_n}{y_n}}\Big)=\nu(\overline{g_n})-\nu(\overline{y_n})
               > \sum_{i=n}^{\infty}\nu(\n_i)-\nu(\n_n)=\sum_{i=n+1}^{\infty}\nu(\n_i).
\end{aligned}
$$
\end{proof}

By Claim~\ref{htoneclaim}, we have $R_n \subset R_{fR}$ for each positive integer $n$. 
Hence $S \subset R_{fR}$. Since $R_{xR} \ne R_{fR}$, this contradicts the fact  that 
the sequence $\{(R_n, \m_n)\}_{n \ge 0}$ is height one directed.

$(3) \Rightarrow (2)$: Assume that $T$ is a rank one valuation domain with associated 
real valuation $\nu$ 
such that $\sum_{n \ge 0} \nu(\n_n)  = \infty$.
Let $W$ be the composite valuation domain defined by the valuations $\nu$ and $\nu_x$.
Thus 
$$
W ~= ~ \{ \alpha \in \mathcal Q(R) ~|~ \nu_x(\alpha) > 0 \quad \text{or} \quad \nu_x(\alpha)=0 \quad \text{and}\quad \nu(\widetilde{\alpha}) \ge 0\},
$$
 where $\widetilde{\alpha}$ denotes  the image of $\alpha$ in $R_{xR}/xR_{xR}$.
We have $S \subset W \subset R_{xR}$.
We prove the following claim : 
\begin{claim}\label{compval}
$W \subset S$. 
\end{claim}

\begin{proof}
Let $\alpha \in W$.  Then $\alpha = x^t \frac{g}{h}$ ,where $ g, h \in R \setminus xR$
are relatively prime in $R$ 
and $t = \nu_x(\alpha) \ge 0$. We consider two cases\\
 (Case i): Assume that $t = \nu_x(\alpha)=0$. Since $\alpha \in W$, we have 
$\nu(\widetilde{\alpha}) \ge 0$.  Let 
$$
r_0 ~:= ~ \min \{~\ord_{R_0}(g),~ \ord_{R_0}(h)~\}.
$$
If $r_0 = 0$, then at least one of $g$ or $h$ is a unit in $R$. If $h$ is a unit in $R$
then $\alpha \in R \subset S$. If $g$ is a unit in $R$, then $\nu(\widetilde g) = 0$. 
Since $\nu(\widetilde \alpha) = \nu(\widetilde g) - \nu(\widetilde h) \ge 0$, we must also have 
$\nu(\widetilde h) = 0$, and $\alpha \in R$. Assume that $r_0 > 0$ and set $g_0 := g$
and $h_0 := h$.  
Since  the transform from $R = R_0$ to $R_1$ is
in the $y = y_0$-direction 
 there exist elements $g_1$ and $h_1$ in $R_1$ such that $g = y^{r_0}g_1$
and $h = y^{r_0}h_1$.  Then $\alpha=\frac{g_1}{h_1}$. 
Notice that the ideal $(g_1, h_1)R_1$ is the transform in
$R_1$ of the ideal $(g, h)R$.

Let $r_1:=\min \{~\ord_{R_1}(g_1),~ \ord_{R_1}(h_1)~\}$. 
If $r_1 = 0$, then $\alpha \in R_1$. If $r_1 > 0$, then since  
the transform from $R_1$ to $R_2$ is
in the $y_1$-direction 
there exist elements $g_2$ and $h_2$ in $R_2$ such that $g_1 = y_1^{r_1}g_2$
and $h_1 = y_1^{r_1}h_2$.  Then $\alpha=\frac{g_1}{h_1}=\frac{g_2}{h_2}$.
The ideal $(g_2, h_2)R_2$  is the transform in $R_2$ of
the ideal $(g_1, h_1)R_1$ and also the transform of the ideal $(g, h)R$.

Suppose the transform of  $(g, h)R$ in $R_n$ is a proper ideal in $R_n$ 
for every integer $n \ge 0$. 
Then $r_n:=\min \{~\ord_{R_n}(g_n),~ \ord_{R_n}(h_n)~\}$ is positive for all $n \ge 0$.
This gives infinite sequences $\{g_{n+1}\}_{n \ge 0}, \{h_{n+1}\}_{n \ge 0}$ such that
$g_{n+1},~ h_{n+1} \in R_{n+1} \setminus x_{n+1} R_{n+1}$, where $ g_{n} = y_n^{r_n}g_{n+1}$
and $h_{n} = y_n^{r_n}h_{n+1}$. 
Then for every  integer $n \ge 0$ we have
$$
\begin{aligned}
&\nu(\widetilde{g_{n+1}})=\nu(\widetilde{g_{n}})-r_n \nu(\widetilde{y_n})=\nu(\widetilde{g_{n}})-r_n \nu(\n_n) \leq \nu(\widetilde{g_{n}})- \nu(\n_n),\\
&\nu(\widetilde{h_{n+1}})=\nu(\widetilde{h_{n}})-r_n \nu(\widetilde{y_n})=\nu(\widetilde{h_{n}})-r_n \nu(\n_n) \leq \nu(\widetilde{h_{n}})- \nu(\n_n).
\end{aligned} 
$$
Hence for every  integer $n \ge 1$ we have
$$
\nu(\widetilde{g_{n}}) \leq  \nu(\widetilde{g_{0}})- \sum_{i=0}^{n-1} \nu(\n_i) \quad 
\text{and}\quad \nu(\widetilde{h_{n}}) \leq  \nu(\widetilde{h_{0}})- \sum_{i=0}^{n-1} \nu(\n_i). 
$$
By taking limits, we have 
$$
\underset{n \to \infty}{\lim} \nu(\widetilde{g_{n}}) \leq  \nu(\widetilde{g_{0}})- \sum_{i=0}^{\infty} \nu(\n_i) \quad 
\text{and}\quad \underset{n \to \infty}{\lim} \nu(\widetilde{h_{n}}) \leq  \nu(\widetilde{h_{0}})- \sum_{i=0}^{\infty} \nu(\n_i)
$$
Since $\sum_{i=0}^{\infty} \nu(\n_i)=\infty$, this is a contradiction.
Hence   $r_n=\min \{~\ord_{R_n}(g_n),~ \ord_{R_n}(h_n)~\}=0$ for some integer $n \ge 0$. 
Then either $g_n$ is a unit in $R_n$ or $h_n$ is a unit in $R_n$.
If $h_n$ is a unit in $R_n$, then $\alpha \in R_n$, and hence $\alpha \in S$.
If $g_n$ is a unit in $R_n$, then $\nu(\widetilde{g_n}) = 0$.  
Since $\nu(\widetilde \alpha) = \nu(\widetilde{g_n}) - \nu(\widetilde{h_n}) \ge 0$, we must also have 
$\nu(\widetilde{h_n}) = 0$, and $\alpha \in R$.

(Case ii): Assume that $t=\nu_x(\alpha) > 0$. Let $\beta_0:= \frac{g}{h}$.  
If $\nu(\widetilde \beta_0) \ge 0$, then by Case~i, $\beta_0 \in S$    
and hence $x^t\beta_0 = \alpha \in S$. If $\nu(\widetilde \beta_0) < 0$, let $\beta_1 := y_0^t\beta_0$.
Then 
$$ 
\alpha~ = ~ x^t\beta_0 ~ = ~ x_1^ty^t\beta_0 ~ =  ~x_1^t\beta_1.
$$
If $\nu(\widetilde \beta_1) \ge 0$, then by (Case~i), $\beta_1 \in S$    
and hence $x_1^t\beta_1 = \alpha \in S$. If $\nu(\widetilde \beta_1) < 0$, let $\beta_2 := y_1^t\beta_1$.
Then 
$$ 
\alpha~ = ~ x_1^t\beta_1 ~ = ~ x_2^ty_1^t\beta_1~=  ~x_2^t\beta_2 \quad 
\text{ and } \quad \beta_2 ~=~y_1^ty_0^t\beta_0 
$$
For each positive integer $n$, we define $\beta_{n+1} = y_n^t\beta_n$. It follows that
$$
\alpha~ = ~ x_n^t\beta_n ~ = ~ x_{n+1}^ty_n^t\beta_n~=  ~x_{n+1}^t\beta_{n+1} \quad 
\text{ and } \quad 
\beta_{n+1} ~=~ (y_ny_{n-1} \cdots y_0)^t\beta_0.
$$
Thus we have 
$$
\nu(\widetilde {\beta_{n+1}}) =~t\Big(\sum_{i=0}^{n} \nu(\widetilde {y_i})\Big) + 
\nu(\widetilde {\beta_0)}~=~ ~
t\Big(\sum_{i=0}^{n} \nu(\n_i)\Big) + \nu(\widetilde {\beta_0}).
$$
Since $\sum_{i=0}^{\infty} \nu(n_i)=\infty$, we have 
$\nu(\widetilde {\beta_{n}}) \ge  0$ for all sufficiently 
large integers $n$. 
Then by Case~i, we have $\beta_{n} \in S$    
and hence $x_{n}^t\beta_{n} = \alpha \in S$.
This completes the proof of Claim~\ref{compval}.
\end{proof}

Hence by Claim~\ref{compval} we have $S=W$, and we conclude that $S$ is a valuation domain.

$(3) \Leftrightarrow (4)$: This equivalence follows from Shannon(\cite[Proposition~4.18]{S}) and Granja(\cite[Theorem~13]{Gr}).
\end{proof}

\begin{remark}\label{7.10}
Let $(R_0,\m_0)$ be a $d$-dimensional regular local ring with $d \ge 2$,
and  let $\{(R_n,\m_n)\}_{n \ge 0}$ be an infinite sequence 
of local quadratic transforms with $\dim R_n = \dim R_0$ for all $n$.
Assume  that $S:=\bigcup_{n \ge 0}R_n$ is a valuation domain of rank greater than one.
If $R_0$ is excellent and equicharacteristic zero,
 Granja proves in \cite[Theorem~17]{Gr} that there exists for some
nonnegative integer $j$  a regular prime  $x$ in $R_j$
such that $S \subset (R_j)_{xR_j}$. 
\end{remark}

\begin{remark}\label{7.11}
Let notation be as in Setting~\ref{htonesetting}.
Assume that $S$ is a valuation domain. 
Let $G$ be the value group of a valuation $v$ associated with $S$.
and let $H$ be the value group of  a valuation $\nu$ associated with $T$.
Then  we have the following :
\begin{enumerate}
\item  $S$ has  rational rank $2$ and $G \cong \Bbb Z \oplus H$.
\item  $T$ has  rational rank $1$.
\item If $T$ is  DVR,  then $G \cong \Bbb Z^2$.
\end{enumerate}
\end{remark}

\begin{proof}
To see item~1, by Granja \cite[Proposition~14]{Gr}, the valuation domain $S$ has rational
rank  $2$.
By \cite[Theorem~15, page~40]{ZS2} 
the set  $G_P:=\{~\pm v(\alpha) ~|~ \alpha \in S \setminus P~\}$ is an isolated subgroup of $G$,
(that is, $G_P$ is a segment and a proper subgroup of $G$).
By \cite[Theorem~17, page~43]{ZS2} the group $H$ and the group $G_P$ 
are isomorphic  as  ordered groups.  Since $S_P=R_{xR}$ is a DVR,
the value group of $S_P$ is $\Bbb Z$. Hence by \cite[Theorem~17, page~43]{ZS2}
the groups $G/G_P$ and $\Bbb Z$ are  order isomorphic. It follows that 
$G \cong \Bbb Z \oplus G_P \cong \Bbb Z \oplus H$.
Items~2 and 3 follow from item~1.
\end{proof}

\begin{remark}\label{7.12}
Let notation be as in Setting~\ref{htonesetting}.
\begin{enumerate}
\item If $d=3$,  then $T=\bigcup_{n \ge 0}A_n$ is a valuation domain by \cite[Lemma~12]{A}. 
\item Let $T$ be as in Setting~\ref{htonesetting}.
If $T$ is a rank one valution domain and $s=\sum_{i\ge 0}^{\infty} \nu(\n_i) < \infty$,
then there exist  infinitely many choices for the positive integer $N$  and hence 
infinitely many nonassociate regular prime elements $f = x - z^N$ in $R$  such that 
$S \subset R_{fR}$. Let $Q = S \cap fR_{fR}$. Then $S/Q$ is an infinite directed union
of $d-1$-dimensional regular local domains. If $d = 3$, then by item~1, 
each of the infinitely many  $S/Q$ is a 
valuation domain. 
\end{enumerate}
\end{remark}

In Example~\ref{7.13} we construct an infinite directed sequence $\{(R_n,\m_n)\}_{n \ge 0}$ 
of local quadratic transforms of a $3$-dimensional regular local ring $R = R_0$
such that $S = \bigcup_{n \ge 0}R_n$ is a rank 2 valuation domain with value group 
$\mathbb Z \oplus H$, where $H$ is rational rank 1 but not discrete.

\begin{example}\label{7.13}
Let $(R_0, \m_0)$ be a $3$-dimensional regular local ring, and let $\m_0 = (x, y, z) R_0$. 
We define a sequence $\{(R_n, \m_n)\}_{n\ge 0}$ of local quadratic transforms of $R_0$ as follows.
The sequence from $R_0$ to $R_3$ is
$$
R:=R_0 ~\overset{y}  {\subset}~  R_1 ~\overset{z}{\subset}~ R_2 ~\overset{y'}{\subset}~ R_3
$$ 
defined as
$$ 
\begin{aligned} 
R_1 &= R_0[\frac{\m}{y}]_{(\frac{x}{y}, y, \frac{z}{y})} \quad\text{ and } \quad \m_1 = (x_1, y_1, z_1)R_1 \\
R_2 &= R_1[\frac{\m_1}{z_1}]_{(\frac{x_1}{z_1}, \frac{y_1}{z_1}, z_1)} \quad\text{ and } \quad \m_2 = (x_2, y_2, z_2)R_2 
=(\frac{x_1}{z_1}, \frac{y_1}{z_1}, z_1)R_2  \\
R_3 &= R_2[\frac{\m_2}{y_2}]_{(\frac{x_2}{y_2}, y_2, \frac{z_2}{y_2} - 1)} \quad\text{ and } \quad \m_3 = (x_3, y_3, z_3)R_3 
= (\frac{x_2}{y_2}, y_2, \frac{z_2}{y_2}-1)R_3 
\end{aligned}
$$
Starting from $(R_3, \m_3)$ and $\m_3 = (x_3, y_3, z_3)R_3$, we define a sequence
$$
R_3 ~\overset{y}  {\subset}~  R_4 ~\overset{y}  {\subset}~  R_5 ~\overset{z}{\subset}~ R_6 ~\overset{y'}{\subset}~ R_7
$$ 
of local quadratic transform with respect to the regular system of parameters $x_3, y_3, z_3$ of $\m_3$ 
such that the $2^1$ transforms from $R_3$ to $R_5$ are both  monomial in the $y$-direction, 
          the transform from $R_5$ to $R_{6}$ is monomial in the $z$-direction, and 
   the transform  from $R_{6}$ to $R_{7}$ is defined in a manner similar to that from $R_2$ to $R_3$.
Thus we have
$$
\m_{7}=(x_{7}, y_{7}, z_{7})R_{7}=\Big(\frac{x_6}{y_6}, y_6, \frac{z_{6}}{y_{6}}-1\Big)R_{7}
=\Big(\frac{x_3}{y_3^3}, \frac{y_3^3}{z_3}, \frac{z_3^2}{y_3^5}-1\Big)R_{7}.
$$
Let $t_0:=0$ and  $t_1:=3$. For each integer $n \ge 2$,  let $t_n=2 t_{n-1}-t_{n-2}+2^{n-2}$.
We inductively define a sequence of local quadratic transforms
with respect to the  regular system of parameters  $x_{t_n}, y_{t_n}, z_{t_n}$ of  $\m_{t_n}$
as follows:

$$
R_{t_n}\overbrace{~\overset{y}  {\subset}~R_{t_n+1}~\overset{y}  {\subset}~\cdots ~ \overset{y}  {\subset}}^{\text{$2^n$ times}}~ R_{t_n+2^n} 
~\overset{z}  {\subset}~ R_{t_n+2^n+1} ~\overset{y'}  {\subset}~  R_{t_n+2^n+2} ,
$$ 
The $2^n$ transforms from $R_{t_n}$ to $R_{t_n+2^n}$ are all monomial in the $y$-direction, 
 the transform from $R_{t_n+2^n}$ to $R_{t_n+2^n+1}$ is monomial in the $z$-direction,
and  the transform from $R_{t_n+2^n+1}$ to $R_{t_n+2^n+2}$ 
is defined in a manner similar to that from $R_6$ to $R_7$.
Thus we have
$$
\begin{aligned}
\m_{t_n}=(x_{t_n}, y_{t_n}, z_{t_n})R_{t_n}&\overbrace{~\overset{y}{\to}~\cdots~\overset{y}  {\to}}^{\text{${2^n}$ times}}
\m_{t_n+{2^n}}=\Big(\frac{x_{t_n}}{y_{t_n}^{2^n}},~ y_{t_n},~ \frac{z_{t_n}}{y_{t_n}^{2^n}}\Big)R_{t_n+{2^n}}\\
&~\overset{z}{\to}~\m_{t_n+{2^n}+1}=\Big(\frac{x_{t_n}}{z_{t_n}},~ \frac{y_{t_n}^{2^n+1}}{z_{t_n}},~ \frac{z_{t_n}}{y_{t_n}^{2^n}}\Big)R_{t_n+{2^n}+1}\\
&~\overset{y'}{\to}~\m_{t_n+{2^n}+2}=\Big(\frac{x_{t_n}}{y_{t_n}^{2^n+1}},~ \frac{y_{t_n}^{2^n+1}}{z_{t_n}},~ 
\frac{z_{t_n}^2}{y_{t_n}^{2(2^n)+1}}-1\Big)R_{t_n+{2^n}+2}
\end{aligned}
$$
Let $S:=\bigcup_{n \ge 0}R_n$ and $\m_S:=\bigcup_{n \ge 0}\m_n$. 
Since we never divide in the $x$-direction, we have $S ~\subset~ R_{xR}$. 
Let $P:=xR_{xR} \cap S$ and let $T:=S/P$ and $\m_T:=\m_S/P$.
For each $n \ge 0$, let $x_n$ be the transform of $x$ in $R_n$.
Then $P~=~\bigcup_{n\ge 0} x_n R_n$.
For each $n \ge 0$, let
$$
A_n:=\frac{R_n}{x_n R_n} \quad \text{and} \quad \n_n:=\frac{\m_n}{x_n R_n}.
$$
Each $(A_n, \n_n)$ is a $2$-dimensional regular local ring with maximal ideal $\n_n$ generated
by the images of $y_n$ and $z_n$ in $A_n$. By identifying $y_n$ and $z_n$ with their images 
in $A_n$, we have $\n_n = (y_n, z_n)A_n$. 
 Moreover 
$\{(A_n, \n_n)\}_{n\ge 0}$ is an infinite sequence of local quadratic transforms of $(A_0, \n_0)$, 
where $\n_0:=(y, z)A_0$. By \cite[Lemma~12]{A}, the ring  $T=\bigcup_{n \ge 0}A_n$ 
is a valuation domain with maximal ideal $\m_T=\bigcup_{n \ge 0}\n_n$.
Let $\nu$ be a valuation  associated with the valuation domain $T$.
The sequence $\{(A_n, \n_n)\}_{n\ge 0}$ is determined by the sequence $\{(R_n, \m_n)\}_{n \ge 0}$.
With the integers $t_n$ as defined in the construction of the 
sequence $\{(R_n,\m_n) \}_{n \ge 0}$, we have 
$$
A_{t_n}\overbrace{~\overset{y}  {\subset}~A_{t_n+1}~\overset{y}  {\subset}~\cdots ~ \overset{y}  {\subset}}^{\text{$2^n$ times}}~ A_{t_n+2^n} 
~\overset{z}  {\subset}~ A_{t_n+2^n+1} ~\overset{y'}  {\subset}~  A_{t_n+2^n+2} ,
$$ 
The $2^n$  transforms from $A_{t_n}$ to $A_{t_n+2^n}$ are all monomial in the $y$-direction, 
the transform from $A_{t_n+2^n}$ to $A_{t_n+2^n+1}$ is monomial in the $z$-direction, and we have
$$
\begin{aligned}
\n_{t_n}=( y_{t_n}, z_{t_n})A_{t_n}&\overbrace{~\overset{y}{\to}~\cdots~\overset{y}  {\to}}^{\text{${2^n}$ times}}
\n_{t_n+{2^n}}=\Big( y_{t_n},~ \frac{z_{t_n}}{y_{t_n}^{2^n}}\Big)A_{t_n+{2^n}}\\
&~\overset{z}{\to}~\n_{t_n+{2^n}+1}=\Big( \frac{y_{t_n}^{2^n+1}}{z_{t_n}},~ \frac{z_{t_n}}{y_{t_n}^{2^n}}\Big)A_{t_n+{2^n}+1}\\
&~\overset{y'}{\to}~\n_{t_n+{2^n}+2}=\Big( \frac{y_{t_n}^{2^n+1}}{z_{t_n}},~ \frac{z_{t_n}^2}{y_{t_n}^{2(2^n)+1}}-1\Big)A_{t_n+{2^n}+2}
\end{aligned}
$$
Since $\frac{z_{t_n+2^n+1}}{y_{t_n+2^n+1}}=\frac{z_{t_n}^2}{y_{t_n}^{2(2^n)+1}} \in A_{t_n+2^n+2} \setminus \n_{t_n+2^n+2}$, 
we have $ \nu(z_{t_n})=\frac{2(2^n)+1}{2} \nu(y_{t_n})$. 

Assume that $\nu(y)=1$. Then we have:
$$
\begin{aligned}  
s:= \sum_{i=0}^{\infty} \nu (\n_i)&=1 +\frac{1}{2}+\frac{1}{2}+\{\frac{1}{2}+\frac{1}{2}\}
+\frac{1}{4}+\frac{1}{4}+\{\frac{1}{4}+\frac{1}{4}+\frac{1}{4}+\frac{1}{4}\}\\
&+\frac{1}{8}+\frac{1}{8}+\overbrace{\{\frac{1}{8}+\frac{1}{8}+\cdots+\frac{1}{8}\}}^{\text{$2^3$ times}}
+\frac{1}{16}+\frac{1}{16}+\overbrace{\{\frac{1}{16}+\frac{1}{16}+\cdots+\frac{1}{16}\}}^{\text{$2^4$ times}}+\cdots =\infty.
\end{aligned}
$$
By Theorem~\ref{htonedirected}, the ring $S$ is the valuation domain,  
the sequence $\{(R_n, \m_n)\}_{n \ge 0}$ is height one directed,
and $S$ has rank  $2$. Let $G$ be the value group of $S$ and let  $H$ be the value group of $T$. 
By Remark~\ref{7.11},  $S$ has rational rank $2$
and $G \cong \Z \oplus H$.
Clearly, $T$ has  rational rank $1$, and $T$ is not a DVR.
\end{example}

In Example~\ref{7.14} we construct an example similar to that of Example~\ref{7.13},
but with $\dim R = 4$ and $\dim A = 3$.

\begin{example}\label{7.14}
Let $(R_0, \m_0)$ be a $4$-dimensional regular local ring, and let $\m_0 = (x, y, z, w) R_0$. 
We define a sequence $\{(R_n, \m_n)\}_{n \ge 0}$ of local quadratic transforms of $R_0$ as follows:
The sequence from $R_0$ to $R_5$ is
$$
R:=R_0 ~\overset{y}  {\subset}~  R_1 ~\overset{y}{\subset}~ R_2 ~\overset{z}{\subset}~ R_3~\overset{w}{\subset}~ R_4~\overset{y'}{\subset}~ R_5
$$ 
defined by
$$ 
\begin{aligned} 
R_1 &= R_0[\frac{\m}{y}]_{( \frac{x}{y}, y,  \frac{z}{y}, \frac{w}{y})} \quad\text{ and } \quad \m_1 = (x_1, y_1,z_1, w_1)R_1        \\
R_2 &= R_1[\frac{\m_1}{y_1}]_{(\frac{x_1}{y_1}, y_1, \frac{z_1}{y_1}, \frac{w_1}{y_1})} \quad\text{ and } 
\quad \m_2 = (x_2, y_2, z_2, w_2)R_2   \\
R_3 &= R_2[\frac{\m_2}{z_2}]_{(\frac{x_2}{z_2},\frac{y_2}{z_2}, z_2, \frac{w_2}{z_2} )} \quad\text{ and } 
\quad \m_3 = (x_3, y_3, z_3, w_3)R_3 \\
R_4 &= R_3[\frac{\m_3}{w_3}]_{(\frac{x_3}{w_3}, \frac{y_3}{w_3}, \frac{z_3}{w_3}, w_3)} \quad\text{ and } 
\quad \m_4 = (x_4, y_4, z_4, w_4)R_4 \\
R_5 &= R_4[\frac{\m_4}{y_4}]_{(\frac{x_4}{y_4}, y_4, \frac{z_4}{y_4}-1,\frac{w_4}{y_4}-1)} \quad\text{ and } 
\quad 
\m_5 = (x_5, y_5, z_5, w_5)R_5\\ \qquad   \qquad  =&  \Big(\frac{x_4}{y_4}, y_4, \frac{z_4}{y_4}-1,\frac{w_4}{y_4}-1 \Big)R_5
                          =\Big(\frac{x}{y^3},\frac{y^3}{w}, \frac{z^2}{y^5}-1, \frac{w^2}{y^3z}-1\Big)R_5
\end{aligned}
$$
Starting from $(R_5, \m_5)$ and $\m_5 = (x_5, y_5, z_5, w_5)R_5$, 
we define a sequence
$$
R_5 ~\overset{y}{\subset}~  R_6 ~\overset{y}{\subset}~R_7 ~\overset{y}{\subset}~  R_8 ~\overset{y}{\subset}~  R_9
~\overset{z}{\subset}~  R_{10} ~\overset{w}{\subset}~ R_{11} ~\overset{y'}{\subset}~ R_{12}
$$ 
of local quadratic transform with respect to the regular system of parameters $x_5, y_5, z_5, w_5$ of $\m_5$ 
such that the $2^2$ transforms from $R_5$ to $R_9$ are all monomial in the $y$-direction, 
          the transform from $R_9$ to $R_{10}$ is monomial in the $z$-direction,
           the transform from $R_{10}$ to $R_{11}$ is monomial in the $w$-direction, and 
    the transform  from $R_{11}$ to $R_{12}$ is defined  in a manner similar to that from $R_4$ to $R_5$
Thus we have
$$
\m_{12}=(x_{12}, y_{12}, z_{12}, w_{12})R_{12}=\Big(\frac{x_{11}}{y_{11}}, y_{11}, \frac{z_{11}}{y_{11}}-1, \frac{w_{11}}{y_{11}}-1\Big)R_{12}
=\Big(\frac{x_5}{y_5^5}, \frac{y_5^5}{w_5}, \frac{z_5^2}{y_5^9 }-1, \frac{w_5^2}{y_5^5 z_5}-1\Big)R_{12}.
$$
Let $t_0:=0, t_1:=5$, and $t_2:=12=2t_1-t_0+2$.
For each $n \ge 3$, we let $t_n:=2t_{n-1}-t_{n-2}+3 \cdot 2^{2(n-2)}$.   
We inductively define a sequence of local quadratic transforms
with respect to the fixed  regular system of parameters $x_{t_n}, y_{t_n}, z_{t_n}, w_{t_n}$ of $\m_{t_n}$ as follows:
$$
R_{t_n}\overbrace{~\overset{y}  {\subset}~R_{t_n+1}~\overset{y}  {\subset}~\cdots ~ \overset{y}  {\subset}}^{\text{$2^{2n}$ times}}~ R_{t_n+{2^{2n}}} 
~\overset{z}  {\subset}~ R_{t_n+{2^{2n}}+1} ~\overset{w}  {\subset}~ R_{t_n+{2^{2n}}+2}~\overset{y'} {\subset}~ R_{t_n+{2^{2n}}+3}
$$
The $2^{2n}$ transforms rom $R_{t_n}$ to $R_{t_n+{2^{2n}}}$ are all monomial in the $y$-direction, 
 the transform from $R_{t_n+{2^{2n}}}$ to $R_{t_n+{2^{2n}}+1}$ is monomial in the $z$-direction, 
the transform from $R_{t_n+{2^{2n}}+1}$ to $R_{t_n+{2^{2n}}+2}$ is monomial in the $w$-direction,
and the transform from $R_{t_n+{2^{2n}}+2}$ to $R_{t_n+{2^{2n}}+3}$ is defined in a manner similar to that from $R_4$ to $R_5$.
Thus    with $\m_{t_n}=(x_{t_n}, y_{t_n}, z_{t_n}, w_{t_n})R_{t_n}$, 
we have
$$
\begin{aligned}
\m_{t_n} &\overbrace{~\overset{y}{\to}~\cdots~\overset{y}  {\to}}^{\text{${2^{2n}}$ times}}
\m_{t_n+{2^{2n}}}=\Big(\frac{x_{t_n}}{y_{t_n}^{2^{2n}}},~ y_{t_n},~ \frac{z_{t_n}}{y_{t_n}^{2^{2n}}}, \frac{w_{t_n}}{y_{t_n}^{2^{2n}}}\Big)R_{t_n+{2^{2n}}}\\
&~\overset{z}{\to}~\m_{t_n+{2^{2n}}+1}
=\Big(\frac{x_{t_n}}{z_{t_n}},~ \frac{y_{t_n}^{2^{2n}+1}}{z_{t_n}},~ \frac{z_{t_n}}{y_{t_n}^{2^{2n}}}, ~\frac{w_{t_n}}{z_{t_n}}\Big)R_{t_n+{2^{2n}}+1}\\
&~\overset{w}{\to}~\m_{t_n+{2^{2n}}+2}=\Big(\frac{x_{t_n}} {w_{t_n}},~ \frac{y_{t_n}^{2^{2n}+1}}{w_{t_n}},
~ \frac{z_{t_n}^2}{y_{t_n}^{2^{2n}}w_{t_n}},~ \frac{w_{t_n}}{z_{t_n}}\Big)R_{t_n+{2^{2n}}+2}\\
&~\overset{y'}{\to}~\m_{t_n+{2^{2n}}+3}=\Big(\frac{x_{t_n}}{y_{t_n}^{2^{2n}+1}},~ \frac{y_{t_n}^{2^{2n}+1}}{w_{t_n}},~ 
\frac{z_{t_n}^2}{y_{t_n}^{2(2^{2n})+1}}-1, \frac{w_{t_n}^2}{y_{t_n}^{2^{2n}+1}z_{t_n}}-1\Big)R_{t_n+{2^{2n}}+3}
\end{aligned}
$$
Let $S:=\bigcup_{n \ge 0}R_n$ and $\m_S:=\bigcup_{n \ge 0}\m_n$. 
Since we never divide in the $x$-direction, we have $S ~\subset~ R_{xR}$. 
Let $P:=xR_{xR} \cap S$ and let $T:=S/P$ and $\m_T:=\m_S/P$.
For each $n \ge 0$, let $x_n$ be the transform of $x$ in $R_n$.
Then $P~=~\bigcup_{n\ge 0} x_n R_n$.
For each $n \ge 0$, let
$A_n:=\frac{R_n}{x_n R_n}$ and $\n_n:=\frac{\m_n}{x_n R_n}$. 
Each $(A_n, \n_n)$ is a $3$-dimensional regular local ring with maximal ideal $\n_n$ generated 
 by images of $y_n, z_n$ and $w_n$ in $A_n$. By identifying $y_n, z_n$ and $w_n$ with their images in $A_n$,
we have $\n_n=(y_n, z_n, w_n)A_n$. 
Moreover 
$\{(A_n, \n_n)\}_{n\ge 0}$ is an infinite sequence of local quadratic transforms of $(A_0, \n_0)$, where $\n_0:=(y, z, w)A_0$.
Then $T=\bigcup_{n \ge 0}A_n$ is a normal local domain with  
 maximal ideal $\m_T=\bigcup_{n \ge 0}\n_n$.
The sequence $\{(A_n, \n_n)\}_{n\ge 0}$ is determined by the sequence $\{(R_n, \m_n)\}_{n\ge 0}$.
Thus the local quadratic transforms from $A_0$ to $A_5$ are:
$$
A_0 ~\overset{y}  {\subset}~  A_1 ~\overset{y}{\subset}~ A_2 ~\overset{z}{\subset}~ A_3~\overset{w}{\subset}~ A_4~\overset{y'}{\subset}~ A_5
$$ 
where
$$
\n_5 = (y_5, z_5, w_5)A_5 =  \Big(y_4, \frac{z_4}{y_4}-1, \frac{w_4}{y_4}-1\Big)A_5 
                          =\Big(\frac{y^3}{w},\frac{z^2}{y^5}-1, \frac{w^2}{y^3z}-1\Big)A_5
$$
Let $W$ be a valuation domain birationally  dominating $T$ and let $\omega$ be a valuation associated with $W$. 
Since $\frac{z_4}{y_4}=\frac{z^2}{y^5}$ and  $\frac{w_4}{y_4}=\frac{w^2}{y^3z}$ are in $ A_5 \setminus \n_5$, we have
$\omega(z) = \frac{5}{2} \omega(y)$ and $\omega(w)=\frac{3}{2}\omega(y)+ \frac{1}{2}\omega(z)$.
The transforms from $A_5$ to $A_{12}$ are:  
$$
A_5 ~\overset{y}{\subset}~  A_6 ~\overset{y}{\subset}~A_7 ~\overset{y}{\subset}~  A_8 ~\overset{y}{\subset}~  A_9
~\overset{z}{\subset}~  A_{10} ~\overset{w}{\subset}~ A_{11} ~\overset{y'}{\subset}~ A_{12}
$$ 
where
$$
\n_{12}=(y_{12}, z_{12}, w_{12})A_{12}=\Big(y_{11}, \frac{z_{11}}{y_{11}}-1, \frac{w_{11}}{y_{11}}-1\Big)A_{12}
=\Big(\frac{y_5^5}{w_5}, \frac{z_5^2}{y_5^9}-1, \frac{w_5^2}{y_5^5 z_5}-1\Big)A_{12}.
$$
Since $\frac{z_{12}}{y_{12}}=\frac{z_5^2}{y_5^9}$ and $\frac{w_{11}}{y_{11}}=\frac{y_5^2}{y_5^5 z_5}$ are in $ A_{12} \setminus \n_{12}$, 
we have 
$\omega (z_5) = \frac{9}{2} \omega (y_5)$ and
$\omega(w_5)=\frac{5}{2}\omega(y_5) + \frac{1}{2}\omega(z_5)$.
With the integers $t_n$ as defined in the construction of the 
sequence $\{(R_n,\m_n) \}_{n \ge 0}$, we have 
$$
A_{t_n}\overbrace{~\overset{y}  {\subset}~A_{t_n+1}~\overset{y}  {\subset}~\cdots ~ \overset{y}  {\subset}}^{\text{$2^{2n}$ times}}~ A_{t_n+{2^{2n}}} 
~\overset{z}  {\subset}~ A_{t_n+{2^{2n}}+1} ~\overset{w}  {\subset}~ A_{t_n+{2^{2n}}+2}~\overset{y'}  {\subset}~  A_{t_n+{2^{2n}}+3}.
$$
The $2^{2n}$ transforms from $A_{t_n}$ to $A_{t_n+{2^{2n}}}$ are all monomial in the $y$-direction, 
 the transform from $A_{t_n+{2^{2n}}}$ to $A_{t_n+{2^{2n}}+1}$ is monomial in the $z$-direction,
 the transform from $A_{t_n+{2^{2n}}+1}$ to $A_{t_n+{2^{2n}}+2}$ is monomial in the $w$-direction,
and with $\n_{t_n}=(y_{t_n}, z_{t_n}, w_{t_n})R_{t_n}$,  
we have 
$$
\begin{aligned}
\n_{t_n} &\overbrace{~\overset{y}{\to}~\cdots~\overset{y}  {\to}}^{\text{${2^{2n}}$ times}}
\n_{t_n+{2^{2n}}}=\Big(y_{t_n},~ \frac{z_{t_n}}{y_{t_n}^{2^{2n}}},~ \frac{w_{t_n}}{y_{t_n}^{2^{2n}}}\Big)A_{t_n+{2^{2n}}}\\
&~\overset{z}{\to}~\n_{t_n+{2^{2n}}+1}=\Big(\frac{y_{t_n}^{{2^{2n}}+1}}{z_{t_n}},~ \frac{z_{t_n}}{y_{t_n}^{{2^{2n}}}},~
 \frac{w_{t_n}}{z_{t_n}}\Big)A_{t_n+{2^{2n}}+1}\\
&~\overset{w}{\to}~\n_{t_n+{2^{2n}}+2}=\Big(\frac{ y_{t_n}^{{2^{2n}}+1} } {w_{t_n}},~ \frac{z_{t_n}^2}{y_{t_n}^{{2^{2n}}} w_{t_n}},
~ \frac{w_{t_n}}{z_{t_n}}\Big)A_{t_n+{2^{2n}}+2}\\
&~\overset{y'}{\to}~\n_{t_n+{2^{2n}}+3}=\Big(\frac{y_{t_n}^{{2^{2n}}+1}}{w_{t_n}},~ \frac{z_{t_n}^2}{y_{t_n}^{2(2^{2n})+1}}-1,~ 
\frac{w_{t_n}^2}{y_{t_n}^{2^{2n}+1} z_{t_n}}-1\Big)A_{t_n+{2^{2n}}+3}
\end{aligned}
$$
Since $\frac{z_{t_n}^2}{y_{t_n}^{2(2^{2n})+1}}$ and 
  $\frac{w_{t_n}^2}{y_{t_n}^{{2^{2n}+1}}z_{t_n}}$
in $ A_{t_n+{2^{2n}}+3} \setminus \n_{t_n+{2^{2n}}+3}$, we have
$\omega( z_{t_n})=\frac{2(2^{2n})+1}{2} \omega(y_{t_n})$ and $\omega(w_{t_n})=\frac{2^{2n}+1}{2}\omega(y_{t_n})+\frac{1}{2}\omega(z_{t_n})$.
Assume that $\omega(y)=1$. Then we have
$$
\begin{aligned}  
s:&= \sum_{i=0}^{\infty} \omega (\n_i)=1 +1+\frac{1}{2}+\Big\{\frac{1}{2^2}+\frac{1}{2^2}\Big\}
+\overbrace{\Big\{\frac{1}{2^2}+\frac{1}{2^2}+\frac{1}{2^2}+\frac{1}{2^2}\Big\}}^{\text{$2^{2\cdot1}$ times}}+\frac{1}{2^3}+\Big\{\frac{1}{2^4}+\frac{1}{2^4}\Big\}\\
&+\overbrace{\Big\{\frac{1}{2^4}+\frac{1}{2^4}+\cdots+\frac{1}{2^4}\Big\}}^{\text{$2^{2\cdot 2}$ times}}+\frac{1}{2^5}+\Big\{\frac{1}{2^6}+\frac{1}{2^6}\Big\}
+\overbrace{\Big\{\frac{1}{2^6}+\frac{1}{2^6}+\cdots+\frac{1}{2^6}\Big\}}^{\text{$2^{2\cdot 3}$ times}}+\frac{1}{2^7}
+\cdots =\infty.
\end{aligned}
$$
By Proposition~\ref{6.2}, the sequence $\{(A_n, \n_n)\}_{n \ge 0}$ switches strongly infinitely often.
Hence by  Remark~\ref{rank}, the ring $T=\bigcup_{n \ge 0}A_n$ is a valuation domain  and $T$ has rank $1$. 
Notice that $T=W$.
By Theorem~\ref{htonedirected}, the ring  $S$ is a valuation domain,  the sequence $\{(R_n, \m_n)\}_{n \ge 0}$ is height one directed,
and $S$ has rank  $2$. Let $G$ be the value group of $S$ and $H$ be the value group of $T$. 
By Remark~\ref{7.11}, the ring $S$ has rational rank $2$
and $G \cong \Z \oplus H$.
Clearly, $T$ has  rational rank $1$, and $T$ is not a DVR.
\end{example}

\end{document}